\makeatletter \@addtoreset{equation}{section} \makeatother
\setlist[enumerate,1]{label=(\arabic*).,font=\textup,
leftmargin=7mm,labelsep=1.5mm,topsep=0mm,itemsep=-0.8mm}
\setlist[enumerate,2]{label=(\alph*).,font=\textup,
leftmargin=7mm,labelsep=1.5mm,topsep=-0.8mm,itemsep=-0.8mm}
\newtheorem{thm}{Theorem}[section]
\newtheorem*{theorem}{Theorem A}
\newtheorem*{corollary}{Corollary B}
\newtheorem{lem}[thm]{Lemma}
\newtheorem{defi}[thm]{Definition}
\newtheorem{prop}[thm]{Proposition}
\newcommand{\IBr}{{\rm IBr}}
\newcommand{\zs}[1]{\hspace{0.1mm}^{#1}\hspace{-0.4mm}}
\newcommand{\which}{\,|\,}
\newcommand{\Irr}{{\rm Irr}}
\newcommand{\bl}{{\rm bl}}
\newcommand{\Tr}{{\rm Tr}}
\newcommand{\br}{{\rm br}}
\newcommand{\Gal}{{\rm Gal}}
\newcommand{\Aut}{{\rm Aut}}
\newcommand{\hH}{\mathcal{H}}
\newcommand{\tH}{{\rm\bf H}}
\newcommand{\N}{{\rm\bf N}}
\newcommand{\C}{{\rm\bf C}}
\newcommand{\Z}{{\rm\bf Z}}
\newcommand{\Res}{{\rm\bf R}}
\newcommand{\End}{{\rm End}}
\newcommand{\M}{{\rm M}}
\newcommand{\ra}{\rightarrow}
\newcommand{\mK}{\mathcal{K}}
\newcommand{\mO}{\mathcal{O}}
\newcommand{\GL}{{\rm GL}}
\newcommand{\semi}{\rtimes}
\newcommand{\Pj}{\mathcal{P}}
\newcommand{\PQ}{\mathcal{Q}}
\newcommand{\mS}{\mathcal{S}}
\newcommand{\I}{{\rm I}}
\newcommand{\zg}{\unlhd}
\newcommand{\bH}{{\bf H}}
\newcommand{\Fp}{\mathbb{F}_p}
\newcommand{\fE}{\mathfrak{E}}
\renewcommand{\Res}{{\rm Res}}
\begin{document}
\title{\bf An equivariant bijection of irreducible Brauer characters above the Dade--Glauberman--Nagao correspondence}
\date{}
 \author{Qulei Fu\footnote{Shenzhen International Center for Mathematics, Southern University of Science and Technology, Shenzhen 518055, China. E-mail: quleifu@outlook.com. 
 The author is supported by the National Natural Science Foundation of China [Grant No. 12350710787].} }
 \maketitle
\begin{abstract}
The Glauberman correspondence and its generalisation, the Dade--Glauberman--Nagao (DGN) correspondence,  play an important role in studying local-global counting conjectures and their reductions to (quasi-)simple groups.
These reduction theorems require an additional set of compatibility conditions for the DGN correspondence. In this paper, we prove that there exists a bijection of irreducible Brauer characters above the DGN correspondence that is equivariant with Galois automorphisms and group automorphisms and preserves vertices. 
Our proof utilizes the framework of $\hH$-triples developed by Navarro--Späth--Vallejo. The results establish a reduction theorem for the Galois Alperin weight conjecture.

 \noindent{\textbf{Keywords:} Finite groups, Brauer characters, DGN correspondence, Galois automorphisms, Group automorphisms.}
\end{abstract}
\section{Background and main theorem.}\label{Section1}
The Glauberman correspondence and its modular version, the Dade--Glauberman--Nagao (DGN) correspondence, have led to important contributions in the study of group representation theory. This is the case for the research of the McKay conjecture, the Brauer's height zero conjecture and the Alperin weight conjecture, and their reductions to simple groups (see \cite{IMN07,NS14,NT11}). 
Recent work has focused on refining these conjectures in two directions: establishing versions that hold for all finite groups when verified for quasi-simple groups (see \cite{Ro23, MRR23}), and incorporating Galois automorphism \cite{NSV20}. However, such generalizations require additional compatibility conditions for the character bijections constructed above the DGN correspondence.
In this paper, we construct a bijection of irreducible Brauer characters above the DGN correspondence that is equivariant under both group automorphisms and Galois automorphisms. This result provides an essential step in the reduction program for the Galois Alperin weight conjecture as formulated in \cite{FFZ24}. 

Let $\mO$ be a complete discrete valuation ring with  fraction field $\mK$ of characteristic zero and finite residue field $F$ of characteristic $p,$ where $p$ is a fixed prime throughout this paper.
We assume $\mK$ and $F$ are \textit{sufficiently large} for all finite groups under consideration.
Let $\mathcal{H}$ be a group of Galois automorphisms of $\mathcal{K}$ that preserve $\mathcal{O}$, inducing an action on $F$ via the quotient map $\mathcal{O} \to F$. We further assume $\mathcal{H}$ is \textit{large enough} to contain all automorphisms of $F$.

A complex irreducible character $\theta$ of a finite group $G$ has \textit{$p$-defect zero} if $\theta(1)_p = |G|_p$, where $n_p$ denotes the $p$-part of an integer $n$. 
Such characters $\theta$ restrict irreducibly to projective Brauer characters $\theta^\circ$ of $G$. Thus, characters with $p$-defect zero  may be identified with projective irreducible Brauer characters in this setting.

 Let $N$ be a normal subgroup of a finite group $M$ with $M/N$ a $p$-group, and let $\theta$ be an $M$-invariant  irreducible character of $N$ with $p$-defect zero.
 Denote by $\tilde{B}$ be the unique block of $M$ covering the defect zero block $\bl(\theta)$ containing $\theta$, and let $D$ be a defect group of $\tilde{B}.$ 
 By Fong's theorem \cite[Chapter V, Theorem 5.16]{NagTsu}, we have $ND=M$ and $N\cap D=1.$ 
 Consequently $\N_M(D)=D\times \C_N(D).$
The Brauer First Main Theorem guarantees a unique block $B$ of $\N_M(D)$ with defect group $D$ and inducing $\tilde{B}.$
This block $B$ covers a unique defect zero block $b^{\star_D}$ of $\C_N(D)$, containing exactly one irreducible character $\theta^{\star_D}$ with $p$-defect zero. We call $\theta^{\star_D}$  the DGN correspondent of $\theta$ (with respect to $D)$.
The DGN correspondence satisfies uniqueness properties \cite[Chapter V, Theorem 12.1]{NagTsu}.
If $p\nmid|N|$, the character $\theta^{\star_D}$ coincides the $D$-Glauberman correspondent of $\theta,$ showing that the DGN correspondence generalizes the Glauberman correspondence.

Assume that $M$ is normally embedded in a finite group $G$ with $N$ normal in $G$, and that $\theta$ is $G$-invariant. 
 In 1980, Dade first noticed that there is a bijection from the set $\Irr(G\which\theta)$ of irreducible complex characters of $G$ lying over $\theta$ to the set $\Irr(\N_G(D)\which\theta^{\star_D})$ \cite{Dade}. 
 However, a complete proof of Dade's theorem did not appear until Turull's work in 2008 \cite{Tu08}. Prior to this, Puig had outlined an alternative approach for part of the proof \cite{Puig}. 
 Later in 2018, Linckelmann completed Puig's proof of Dade's theorem in his book \cite{Linckelmann}, where he named this result the \emph{Dade's Fusion Splitting Theorem} (see \cite[Theorem 7.9.1]{Linckelmann}).

Turull proved that Dade's bijection can be chosen to be equivariant under the action of both group automorphisms and Galois automorphisms \cite{Tu08}. 
However, he only considered the case of Glauberman correspondence and complex characters. 
In this paper, we generalize Turull's results to the DGN correspondence and Brauer characters. 
Note that Turull's work relies on the theory of Brauer-Clifford groups developed in his series of papers \cite{Tu94,Tu09:BC,Tu09:FM}. 
It appears that the language of $\hH$-triples introduced by Navarro-Sp\"ath-Vallejo in \cite{NSV20} is better suited for describing character bijections in the related groups.

   We require that the elements in $\hH$ are composed from the left, and we use right exponential notation (i.e. $x^{\sigma}$ for the image of $x\in\mK\sqcup F$ under the automorphism $\sigma\in\hH$). 
   We also write $h^g=g^{-1}hg$ for group elements $g,h\in G.$
   For blocks and characters in general we use the notation as introduced in \cite{NagTsu}.
Let $S$ be a subgroup of a finite group $G$ and $\chi$ a Brauer character of $S$. 
For any $\sigma \in \mathcal{H}$ and $g\in G,$  the function $\chi^{\sigma}:S_{p'}\to \mO$ defined by $h \mapsto \chi(h)^{\sigma}$ is a Brauer character of $S$,
and the function $\chi^{g}: S^{g}_{p'} \to \mO$ defined by $h \mapsto \chi(h^{g^{-1}})$ is a Brauer character of $S^{g}$, where $S_{p'}$ denotes the set of $p$-regular elements of $S$.

Let $N$ be a subgroup of a finite group $G$ and $\theta\in\IBr(N).$ Then we denote by $\IBr(G\which \theta)$ the subset of $\IBr(G)$ consisting of elements lying over $\theta.$  We denote by $\theta^{\hH}$ the $\hH$-orbit of $\theta$ and by $\IBr(G\which \theta^{\hH})$ the subset of $\IBr(G)$ consisting of elements lying over some $\hH$-conjugate of $\theta.$ This set is $$\bigcup_{\sigma\in\hH}\IBr(G\which \theta^{\sigma}).$$
We denote by $\bl(\theta)$ the block of $N$ containing $\theta$.
If $\chi\in\IBr(G),$ we denote by $\chi_N$ the restriction of $\chi$ to $N.$
A vertex of  $\chi$ is a minimal $p$-subgroup $V$ of $G$ for which the associated simple $FG$-module is relatively $V$-projective (see \cite[Chapter IV, Section 3]{NagTsu} for details).
We denote by $\mathcal{S}(G,N)$  the set of subgroups of $G$ containing $N.$

 We are going to state the main theorem of this paper. 

 \begin{theorem}\label{thmA}
   Let $N$ be a normal subgroup of a finite group $G,$ with $M/N$ a normal $p$-subgroup of $G/N.$
   Let $\theta\in\IBr(N)$ be an $M$-invariant character with $p$-defect zero, and let $D$ be a defect group of the unique block of $M$ covering $\bl(\theta)$.
   Set $C=\C_N(D)$ and $H=\N_G(D),$ and assume that $\theta^{\hH}$ is $G$-stable.
   Let $\varphi=\theta^{\star_D}\in\IBr(C)$ be the DGN correspondent of $\theta.$
Then there exists an $\hH$-equivariant bijection 
   $$\varDelta_S: \IBr( S\which\theta^{\hH})\ra \IBr(S\cap H\which\varphi^{\hH})$$ for each $S\in\mathcal{S}(G,N),$ satisfying:
   \begin{enumerate}
        \item \textbf{Conjugation Compatibility}: For any $\chi \in \IBr(S \which \theta^{\hH})$ and $g \in H,$ 
        \[
        \varDelta_S(\chi)^g = \varDelta_{S^g}(\chi^g).
        \]
        
        \item \textbf{Restriction Compatibility}: For any $\chi \in \IBr(S \which \theta^{\hH})$ and $T \in \mathcal{S}(S,N),$ 
        \[
        \varDelta_S(\chi)_{T \cap H} = \varDelta_T(\chi_T).
        \]
        
        \item \textbf{DGN Consistency}: $\varDelta_N(\theta) = \varphi$.
        
        \item \textbf{Vertex Relation}: For any $\chi \in \IBr(S \which \theta^{\hH})$ with a vertex $V_1$, there exists a vertex $V_2$ of $\varDelta_S(\chi)$ such that $V_1N = V_2N$.
    \end{enumerate}
 \end{theorem}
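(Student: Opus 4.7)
The plan is to build $\varDelta_S$ in two stages: first for a fixed representative $\theta$ of its $\hH$-orbit, and then to extend the construction $\hH$-equivariantly across the orbit. For the first stage I would invoke Dade's Fusion Splitting Theorem in Linckelmann's form \cite[Theorem~7.9.1]{Linckelmann}, applied to the block $\tilde B$ of $M$ containing $\theta$. This produces, for every $S \in \mathcal{S}(G,N)$, a source-algebra (hence Morita) equivalence between the blocks of $S$ covering $\bl(\theta)$ and the blocks of $S \cap H$ covering the DGN block $b^{\star_D}$ of $C$. Passing to simple $F$-modules yields bijections $\varDelta_S^0\colon \IBr(S\which\theta) \to \IBr(S\cap H\which\varphi)$. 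Condition (3) is immediate from Dade's setup, and the functoriality of the source-algebra picture in $S$ and under conjugation by $g \in H$ gives the fibrewise versions of (1) and (2) at once.

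In the second stage I would recast the situation in the $\hH$-triple framework of Navarro--Sp\"ath--Vallejo \cite{NSV20}. Since Dade's source-algebra equivalence is canonical up to inner automorphisms of its source algebra, the Galois twist by any $\sigma \in \hH$ of the equivalence attached to $(N,\theta)$ is an equivalence attached to $(N,\theta^\sigma)$; the triples $(G,N,\theta)$ and $(H,C,\varphi)$ thus correspond under a morphism of $\hH$-triples. The hypothesis that $\theta^{\hH}$ is $G$-stable says precisely that Galois transport glues the individual fibre bijections $\varDelta_S^0\colon \IBr(S\which\theta^\sigma) \to \IBr(S\cap H\which\varphi^\sigma)$ for $\sigma \in \hH$ into a single well-defined $\hH$-equivariant bijection $\varDelta_S$ on the full orbit, and conditions (1)--(3) transfer from $\varDelta_S^0$ without further work.

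The main obstacle I anticipate is the vertex relation (4). A source-algebra equivalence preserves vertices only up to conjugation by an element of $M$, and by Fong's theorem one has $M = ND$ with $N \cap D = 1$; hence for $\chi \in \IBr(S\which\theta^{\hH})$ with vertex $V_1$, the image $\varDelta_S(\chi)$ has a vertex $V_2$ conjugate to $V_1$ by an element whose coset in $M/N$ lies in the $p$-group $D N/N$. This conjugation disappears after passage to $N$-cosets, yielding $V_1 N = V_2 N$. A secondary technical point, where I expect the real work to lie, is verifying that the cohomological choices implicit in the $\hH$-triple formalism (projective representatives and their Galois lifts) are compatible with the source-algebra identification under Galois twisting. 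I would resolve this by importing and adapting the Brauer--Clifford analysis of Turull \cite{Tu09:BC} into the NSV language, using the canonicity of Dade's source algebra to transport the $2$-cocycle data on the $\theta$-side to the $\varphi$-side and back.
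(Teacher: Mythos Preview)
Your overall strategy---Dade's theorem plus the $\hH$-triple formalism, with a cohomological bridge in the spirit of Turull---is close to the paper's, but two of your load-bearing claims do not stand as stated.

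First, your vertex argument for (4) is wrong. Even granting that a source-algebra equivalence gives $V_2 = V_1^m$ for some $m \in M = ND$, writing $m = nd$ yields $V_2 N = (V_1 N)^d$, and there is no reason $d \in D$ should normalise $V_1 N$: $V_1$ lies in $S$, not in $M$, and $V_1 N/N$ is an arbitrary $p$-subgroup of $S/N$ on which $D$ acts nontrivially in general. The paper's argument is entirely different and does not pass through any conjugacy of $V_1$ and $V_2$: it shows that when the bijection is induced by a pair $(\Pj,\Pj')$ of projective representations over $\bar G = G/N = H/C$, both $V_1 N/N$ and $V_2 C/C$ are vertices (in the $G$-algebra sense) of the \emph{same} irreducible projective representation $\PQ$ of $\bar G$ with factor set $\alpha^{-1}$. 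The key computation is that $x \mapsto x \otimes \I_{\theta(1)}$ identifies $\M_s(F)^{\bar T}$ with $\M_{s\theta(1)}(F)^T$ compatibly with trace maps for every $T \supseteq N$, so the minimal $T$ with $1 \in P^G_T$ is exactly $V_1 N$.

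Second, your appeal to Linckelmann's Theorem~7.9.1 does not produce, for each $S \in \mathcal{S}(G,N)$, a source-algebra equivalence between blocks of $S$ over $\bl(\theta)$ and blocks of $S\cap H$ over $b^{\star_D}$. That theorem is a statement about the source algebra of the single block $\tilde B$ of $M$; extracting compatible equivalences at every intermediate $S$ (and with the restriction/conjugation naturality you need for (1) and (2)) is precisely the work you are skipping. The paper avoids this by never invoking a Morita equivalence at the level of $S$: instead it associates to each $\hH$-triple a class in $\bH^2(\bar G, E^\times)$ with $E = \Fp[\theta]$, uses Dade's theorem (in Turull's form, via a group homomorphism $\N_{\fE_1^\times}(\iota(D_1)) \to \fE_2^\times$ compatible with the Brauer map) to show the two classes coincide, and then deduces the partial order $(G,N,\theta)_{\hH} \geqslant (H,C,\varphi)_{\hH}$. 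From there an abstract $\hH$-triple theorem builds all the $\varDelta_S$ simultaneously from a single pair $(\Pj,\Pj')$; conditions (1)--(3) and the $\hH$-equivariance fall out of explicit formulas for the functions $\mu_a$, and (2) requires a genuine Mackey-formula computation that your ``functoriality'' hides.
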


\noindent {\itshape Remark.} It is straightforward to see that $G=HN$ and $C=H\cap N$ in the theorem. 
Furthermore, we observe that the bijection $\varDelta_T$ in condition (2) naturally extends to an $\mathbb{N}$-linear map
\[
\varDelta_T \colon \mathbb{N}\IBr(T \which \theta^{\hH}) \to \mathbb{N}\IBr(T\cap H \which \varphi^{\hH}),
\]
where $\mathbb{N}$ denotes the set of non-negative integers. 
This extension is necessary because the restriction $\chi_T$ may be reducible for $\chi \in \IBr(S \which \theta^{\hH})$.

By Theorem A we deduce the following corollary, which says that there exists a bijection of irreducible Brauer characters above the DGN correspondence that is equivariant under the action of both Galois automorphisms and group automorphisms. 

\begin{corollary}
Let $N$ be a normal subgroup of a finite group $G,$ with $M/N$ a normal $p$-subgroup of $G/N.$
   Let $\theta\in\IBr(N)$ be a $G$-invariant character with $p$-defect zero, and let $D$ be a defect group of the unique block of $M$ covering $\bl(\theta)$.
   Let $\varphi=\theta^{\star_D}\in\IBr(\C_N(D))$ be the DGN correspondent of $\theta$ with respect to $D.$
Then there exists an $\big(\hH\times\Aut(G)\big)_{\theta,D}$-equivariant bijection 
   $$f: \IBr( G\which\theta )\ra \IBr(\N_G(D)\which\varphi),$$ 
  such that if $V_1$ is a vertex of $\chi\in\IBr( G\which\theta)$, then there exists a vertex $V_2$   of $f(\chi)$ such that $V_1N =V_2N.$ 
\end{corollary}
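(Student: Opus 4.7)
The plan is to derive the corollary as a direct consequence of Theorem A by passing to an auxiliary semidirect product that absorbs the $\Aut(G)$-action into conjugation. Let
$$A = \{\alpha \in \Aut(G) : N^\alpha = N,\ D^\alpha = D,\ \theta^\alpha \in \theta^\hH\}.$$
Every $\alpha$ appearing as the second coordinate of an element of $(\hH \times \Aut(G))_{\theta,D}$ lies in $A$: the condition $\theta^{\sigma\alpha} = \theta$ forces $N^\alpha = N$ (so that $\theta^\alpha$ is a character of $N$) together with $\theta^\alpha = \theta^{\sigma^{-1}} \in \theta^\hH$. I form $\tilde{G} = G \rtimes A$ and verify the hypotheses of Theorem A for the data $(\tilde{G}, N, M, \theta, D)$: $N$ is normal in $\tilde{G}$; Fong's theorem gives $M = ND$, so $M$ is automatically $A$-stable and $M/N$ is a normal $p$-subgroup of $\tilde{G}/N$; $\theta$ remains $M$-invariant of $p$-defect zero; and $\theta^\hH$ is $\tilde{G}$-stable by construction. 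I also note that $\N_{\tilde{G}}(D) \cap G = H$.

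Applying Theorem A to $\tilde{G}$ with $S = G \in \mathcal{S}(\tilde{G}, N)$ yields an $\hH$-equivariant bijection
$$\varDelta_G : \IBr(G \which \theta^\hH) \to \IBr(H \which \varphi^\hH),$$
and I define $f$ as the restriction of $\varDelta_G$ to $\IBr(G \which \theta)$. To see that $f$ lands in $\IBr(H \which \varphi)$, I apply Restriction Compatibility with $T = N$: for $\chi \in \IBr(G \which \theta)$, Clifford's theorem gives $\chi_N = e\theta$ for some $e \geq 1$ (since $\theta$ is $G$-invariant), so $\varDelta_G(\chi)_C = \varDelta_N(\chi_N) = e\varphi$ by DGN Consistency, forcing $\varDelta_G(\chi)$ to lie over $\varphi$. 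The same argument applied to the inverse of $\varDelta_G$ confirms that $f$ is a bijection onto $\IBr(H \which \varphi)$.

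For equivariance, the $\hH$-equivariance of $\varDelta_G$ restricts to $f$. For $\alpha \in A \subseteq \N_{\tilde{G}}(D)$, Conjugation Compatibility inside $\tilde{G}$ gives $\varDelta_G(\chi^\alpha) = \varDelta_G(\chi)^\alpha$, since $G^\alpha = G$. The commuting actions of $\hH$ and $\Aut(G)$ on characters then combine these into an $(\hH \times A)_\theta$-equivariance of $f$, and by the observation in the first paragraph this stabiliser coincides with $(\hH \times \Aut(G))_{\theta,D}$. The vertex relation is inherited directly from condition (4) of Theorem A. The only delicate point in the whole reduction is the precise definition of $A$: it must be large enough to contain the $\Aut(G)$-component of every relevant stabiliser element, yet small enough that $\theta^\hH$ remains $\tilde{G}$-stable so that Theorem A applies, with Fong's theorem $M = ND$ being the essential structural input that makes $M$ automatically $A$-stable.
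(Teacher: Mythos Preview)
Your proof is correct and follows essentially the same approach as the paper: form the semidirect product $\tilde G = G\rtimes A$ with $A=\Aut(G)_{\theta^{\hH},D}$, apply Theorem~A to $\tilde G$ with $S=G$, then restrict $\varDelta_G$ to $\IBr(G\mid\theta)$ using conditions (2)--(3) and read off equivariance from the $\hH$-equivariance together with condition (1). Your definition of $A$ coincides with the paper's $T=\Aut(G)_{\theta^{\hH},D}$, and your verification that $M=ND$ is $A$-stable and that $(\hH\times A)_\theta=(\hH\times\Aut(G))_{\theta,D}$ simply makes explicit what the paper leaves to the reader.
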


The proof of Theorem~A employs the theory of $\hH$-triples developed in \cite{NSV20}, adapted here to the modular setting. Our strategy proceeds in three stages.
In Section~2, we show that when two modular $\hH$-triples satisfy a partial order relation analogous to \cite[Definition~1.5]{NSV20}, one can construct a system of character bijections between related groups that fulfills all conditions of Theorem~A.
In Section~3, 
to verify this partial order relation, we associate to each modular $\hH$-triple a 2-th cohomology class of the quotient group. We prove that $\hH$-triples sharing a common cohomology class necessarily satisfy the required partial order relation.
In Section~4,
using Dade's fusion splitting theorem, we demonstrate that the $\hH$-triples in Theorem~A indeed share such a cohomology class, thereby completing the proof. The section concludes with the proof of Corollary~B.

\section{Modular $\hH$-triples} 
The partial order relation of character triples was first introduced in \cite{NS14}. To incorporate Galois automorphisms, Navarro, Sp\"ath, and Vallejo \cite{NSV20} developed the concept of $\hH$-triples and extended this partial order to $\hH$-triples.
In this paper, we work with the modular version of $\hH$-triples. 
Note that the partial order relation of modular character triples are already exists in the literature \cite{MRR23, SV16}. 
For foundational material on character triples and their partial order relations, we refer to \cite[Chapter 10]{Navarro:McKay} and \cite[Chapter III, Section 5]{NagTsu}.

Throughout this paper, we fix a prime $p$ and work with a $p$-modular system $(\mK,\mO,F)$ as defined in Section~1 (where $\hH$ is also defined). 

For any subgroup $S\leqslant G$ and Brauer character $\chi$ of $S$, and for any $a = (\sigma, g) \in \hH \times G$, we define $\chi^a$ as the Brauer character of $S^g$ satisfying
\[
\chi^a(h) = \chi(ghg^{-1})^\sigma \quad \text{for all } h \in S_{p'}^g.
\]
For consistency, we set $S^a := S^g$, making $\chi^a$ a Brauer character of $S^a$.

Let $X, X' \colon G \to \GL_m(F)$ be group representations (or projective representations). We say they are \emph{similar} (written $X' \sim X$) if there exists an invertible matrix $T \in \GL_m(F)$ such that
\[
X'(g) = TX(g)T^{-1} \quad \text{for all } g \in G.
\]

Throughout this paper, all tensor products are over $F$ unless specified otherwise, and all algebras and modules are finite-dimensional.

Let $N$ be a normal subgroup of a finite group $G,$ and $\theta\in\IBr(N).$ 
Assume the $\hH$-orbit $\theta^{\hH}$ is $G$-stable. 
Then we write $(G,N,\theta)_{\hH}$ and call it a \emph{modular $\hH$-triple}. 
Since all $\hH$-triples considered in this paper are modular, we will simply refer to them as $\hH$-triples when no confusion arises.

Let $(G,N,\theta)_{\hH}$ be an $\hH$-triple. For any $g \in G$, there exists $\sigma \in \hH$ such that $\theta^g = \theta^\sigma$. Let $G_\theta$ denote the stabilizer of $\theta$ in $G$, which is normal in $G$ since for any $h \in G_\theta$ and $g \in G$,
\[
\theta^{ghg^{-1}} = \theta^{\sigma g^{-1}} = \theta.
\]
By \cite[Chapter V, Theorem 5.7]{NagTsu}, there is a projective representation $\Pj \colon G_\theta \to \GL_{\theta(1)}(F)$ associated with $\theta$.
The corresponding factor set $\alpha \colon G_\theta \times G_\theta \to F^\times$, defined by
\[
\Pj(h)\Pj(k) = \alpha(h,k)\Pj(hk) \quad (h,k \in G_\theta),
\]
is constant on $N \times N$-cosets,
and may therefore be viewed as a factor set of $G_\theta/N$.
Note that $\alpha(1,1)=1.$
For $a = (\sigma,g) \in (\hH \times G)_\theta$ (the stabilizer of $\theta$ in $\hH \times G$), the map 
\[
\Pj^a \colon G_\theta \to \GL_{\theta(1)}(F),
h\mapsto \Pj(ghg^{-1})^\sigma,
\]
is also a projective representation associated with $\theta$. By \cite[Remark 1.3, Lemma 1.4]{NSV20}, there exists a unique function $\mu_a \colon G_\theta \to F^\times$ such that $\Pj^a \sim \mu_a \Pj$ (i.e. there exists an invertible matrix $T \in \GL_{\theta(1)}(F)$ such that $\Pj^a(h) = \mu_a(h)T\Pj(h)T^{-1}$ for all $h \in G_\theta$).
We say $\mu_a$ is determined by $\Pj^a \sim \mu_a \Pj$.
Since $\mu_a$ is constant on $N$-cosets,
we can regard $\mu_a$ as a function on $G_\theta/N$. 
Note that $\mu_a(1)=1.$

The following definition is analogous to \cite[Definition 1.5]{NSV20}. A distinction here is that the two projective representations are not required to coincide on the centralizer subgroup.

\begin{defi}\label{def:H-tri:ord}
  Suppose that $(G,N,\theta)_{\hH}$ and $(H,C,\varphi)_{\hH}$ are $\hH$-triples. We write $(G,N,\theta)_{\hH}\geqslant (H,C,\varphi)_{\hH}$ if the following conditions hold:
\begin{enumerate}
  \item $G=NH$ and $N\cap H=C.$
  \item $(\hH\times H)_{\theta}=(\hH\times H)_{\varphi}.$ In particular, $H_{\theta}=H_{\varphi}.$
  \item There are projective representations $\Pj$ of $G_{\theta}$ and $\Pj'$ of $H_{\varphi}$ associated with $\theta$ and $\varphi$ with factor sets $\alpha$ and $\alpha',$ respectively, such that $\alpha'=\alpha_{H_{\theta}\times H_{\theta}}.$
  \item For any $a\in(\hH\times H)_{\theta},$ the  functions $\mu_a$ and $\mu'_a$ determined by $\Pj^a\sim\mu_a\Pj$ and $\Pj'^a\sim\mu_a'\Pj',$ respectively, agree on $H_{\theta}.$ 
\end{enumerate}
\end{defi}

In the situation described above we say that $(\Pj,\Pj')$ \emph{gives} $(G,N,\theta)_{\hH}\geqslant (H,C,\varphi)_{\hH}.$

Now, let $G$ be a finite group, $N$ a normal subgroup of $G,$ and $C,H$ subgroups of $G$ such that $G=NH,N\cap H=C.$ 
Let $\theta\in\IBr(N)$ be $G$-invariant, and $\varphi\in\IBr(C)$ be $H$-invariant.
Assume that there exist projective representations $\Pj$ of $G$ and $\Pj'$ of $H$ associated with $\theta$ and $\varphi$ with factor sets $\alpha$ and $\alpha',$ respectively, such that $\alpha'=\alpha$ when they are viewed as factor sets of the group $G/N=H/C.$ 
Then we can construct a bijection $$':\IBr(G\which \theta)\ra \IBr(H\which\varphi),$$ such that if the group representation $\PQ\otimes \Pj$ affords the Brauer character $\chi\in\IBr(G\which \theta),$ then the group representation $\PQ\otimes \Pj'$ affords the Brauer character $\chi',$ where $\PQ$ is any (absolutely) irreducible projective representation of $G/N=H/C$ with factor set $\alpha^{-1}$ (see \cite[Theorem 10.13]{Navarro:McKay} or \cite[Chapter V, Theorem 5.8]{NagTsu} for details). In this case, we say that the pair $(\Pj,\Pj')$ \emph{induces} the bijection $'.$

Keep the notation above. Let $a=(\sigma_a,\phi_a)\in\big(\hH\times\Aut(G)\big)_{N,H,\theta,\varphi},$ the subgroup of $\hH\times\Aut(G)$ consisting of elements stabilizing $N,H,\theta$ and $\varphi$ simultaneously. Define 
\begin{align*}
  \Pj^a: & G\ra\GL_{\theta(1)}(F),g\mapsto \Pj(g^{\phi_a^{-1}})^{\sigma_a},\quad \text{ and } \\
  \Pj'^a: & H\ra \GL_{\varphi(1)}(F), h\mapsto \Pj'(h^{\phi_a^{-1}})^{\sigma_a}.
\end{align*}
Then $\Pj^a$ and $\Pj'^a$ are projective representations of $G$ and $H$ associated with $\theta$ and $\varphi$, respectively. Let $\mu_a,\mu'_a:G/N=H/C\ra F^{\times}$ be the  functions determined by $\Pj^a\sim\mu_a\Pj$ and $\Pj'^a\sim\mu'_a\Pj',$ respectively.

\begin{lem}\label{lem:H-tri1}
  With the previous notation. If $\mu_a=\mu'_a$ when viewed as maps from $G/N=H/C$ to $F^{\times},$ then $(\chi^a)'=(\chi')^a$ for every $\chi\in\IBr(G\which\theta),$ where $\chi^a\in\IBr(G\which\theta)$ is defined by $\chi^a(g)=\chi(g^{\phi_a^{-1}})^{\sigma_a}$ for $g\in G_{p'}.$
\end{lem}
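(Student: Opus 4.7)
The plan is to pass to the underlying projective representations and track how $a$ commutes with the tensor product construction of the bijection $'$. First I would fix an irreducible projective representation $\PQ$ of $G/N=H/C$ with factor set $\alpha^{-1}$ such that $\PQ\otimes\Pj$ is an ordinary representation of $G$ affording $\chi$. By the definition of $'$, the representation $\PQ\otimes\Pj'$ of $H$ then affords $\chi'$.

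Next I would apply $a=(\sigma_a,\phi_a)$ componentwise. Since the action of $a$ distributes over tensor products, $\chi^a$ is afforded by $\PQ^a\otimes\Pj^a$ and $(\chi')^a$ by $\PQ^a\otimes\Pj'^a$. Using the defining similarities $\Pj^a\sim\mu_a\Pj$ and $\Pj'^a\sim\mu'_a\Pj'$, and the fact that a scalar function can be transferred freely between the two factors of a tensor product, I would rewrite these as: $\chi^a$ is afforded by $(\mu_a\PQ^a)\otimes\Pj$, while $(\chi')^a$ is afforded by $(\mu'_a\PQ^a)\otimes\Pj'$.

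The crucial observation is then that $\mu_a\PQ^a$ is an irreducible projective representation of $G/N$ with factor set $\alpha^{-1}$. Irreducibility is preserved under Galois twisting, conjugation by $\phi_a$, and scaling by a function. The factor set must equal $\alpha^{-1}$ because $(\mu_a\PQ^a)\otimes\Pj$ is already an honest ordinary representation (it affords the Brauer character $\chi^a$) while $\Pj$ carries factor set $\alpha$. Applying the construction of $'$ to the pair $(\chi^a,\mu_a\PQ^a)$ therefore yields that $(\chi^a)'$ is afforded by $(\mu_a\PQ^a)\otimes\Pj'$. Finally, the hypothesis $\mu_a=\mu'_a$ on $G/N=H/C$ forces $(\mu_a\PQ^a)\otimes\Pj'=(\mu'_a\PQ^a)\otimes\Pj'$, whence $(\chi^a)'=(\chi')^a$.

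The only delicate point I anticipate is the bookkeeping around the identification $G/N=H/C$ and confirming that moving a scalar function across a tensor product is compatible with restriction from $G$ to $H$ and with the combined Galois and inner actions encoded in $a$; but this should amount to routine verification, with no new ingredients beyond the tensor product decomposition underlying the construction of $'$.
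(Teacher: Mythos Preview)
Your proposal is correct and follows essentially the same route as the paper: choose $\PQ$ with $\PQ\otimes\Pj$ affording $\chi$, apply $a$ componentwise, use $\Pj^a\sim\mu_a\Pj$ and $\Pj'^a\sim\mu'_a\Pj'$ to shift the scalar into the $\PQ$-factor, observe that $\mu_a\PQ^a$ again has factor set $\alpha^{-1}$, and then invoke the hypothesis $\mu_a=\mu'_a$. The paper's argument is identical up to the order in which the two sides are unfolded.
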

\begin{proof}
  Let $\chi\in\IBr(G\which\theta),$ and let $\PQ$ be a projective representation of $G/N=H/C$ with factor set $\alpha^{-1}.$ 
  Suppose that $\PQ\otimes\Pj$ affords $\chi,$ then  $\PQ\otimes\Pj'$ affords $\chi'.$ Since $(\PQ\otimes\Pj)^a$ affords $\chi^a$, and 
  $$(\PQ\otimes\Pj)^a= \PQ^a\otimes\Pj^a\sim\PQ^a\otimes\mu_a\Pj= \PQ^a\mu_a\otimes\Pj,$$
  If follows that $\PQ^a\mu_a\otimes\Pj'$ affords $(\chi^a)'$. 
  On the other hand, 
  $$\PQ^a\mu_a\otimes\Pj'= \PQ^a\mu'_a\otimes\Pj'= \PQ^a\otimes\mu'_a\Pj'\sim\PQ^a\otimes(\Pj')^a= (\PQ\otimes\Pj')^a,$$
  and $(\PQ\otimes\Pj')^a$ affords  $(\chi')^a$. 
  Thus, $(\chi^a)'=(\chi')^a.$
\end{proof}

In fact, the bijection $':\IBr(G\which \theta)\ra \IBr(H\which \varphi)$ preserves vertices of characters. 
To prove this, we require some preliminaries on $G$-algebras.

Let $A$ be an $F$-algebra, and let $\Aut_F(A)$ denote the group of $F$-algebra automorphisms of $A,$ where automorphisms are composed from the left.
Let $G$ be a finite group. A $G$-algebra (or more precisely a $G$-algebra over $F$) is a pair $(A,\psi),$ where $A$ is an $F$-algebra and $\psi:G\ra\Aut_F(A)$ is a group homomorphism. If no confusion arises we only write $A$ instead of $(A,\psi)$ to denote a $G$-algebra. The (right) action of $g\in G$ on $x\in A$ is denoted by $x^g:=\psi(g)(x).$ 
For further details, see \cite[\S 10]{Thevenaz}.

Let $A$ be a $G$-algebra over $F.$ For any subgroup $S$ of $G,$ we define $$A^S=\{x\in A\which x^g=x \text{ for any } g\in S\}.$$
 If $T$ is a subgroup of $S,$ then the \emph{trace map} $\Tr_T^S$ is defined by $$\Tr_T^S:A^T\ra A^S,x\mapsto x^{g_1}+\cdots+x^{g_t},$$ where ${g_1,\cdots,g_t}$ is a complete set of representatives of right $T$-cosets in $S.$ 
 The trace map $\Tr_T^S$ is $F$-linear and is independent of the choice of coset representatives of $T$ in $S$.
We denote its image by $A_T^S.$
 
 Let $\M_s(F)$ denote the matrix algebra of degree $s$ over $F$, where $s$ is a positive integer, and let $\I_s$ be the identity matrix of degree $s$. Note that the general linear group $\GL_s(F)$ coincides with the group of units $\M_s(F)^\times$ of $\M_s(F)$.

\begin{lem}\label{lem:H-tri2}
Let $X: G \to \GL_s(F)$ be either an irreducible group representation or an irreducible projective representation of degree $s$. 
Let $A = \M_s(F)$, where $\GL_s(F)$ is identified with its unit group. 
Then $A$ becomes a $G$-algebra under the conjugation action:
$$x^g=X(g)^{-1}xX(g)\quad \text{for } x\in A,g\in G.$$
There exists a $p$-subgroup $V \leqslant G$ such that:
\begin{enumerate}
\item $1_A\in A^G_V.$
\item For any subgroup $H \leqslant G$ with $1_A \in A^G_H$, $V$ is contained in some $G$-conjugate of $H$.
\end{enumerate}  
\end{lem}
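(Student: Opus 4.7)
The plan is to recognise $1_A$ as a primitive idempotent of the fixed subalgebra $A^G$ and then invoke the classical theory of defect groups for primitive idempotents in a $G$-algebra, as developed in Th\'evenaz's book on $G$-algebras.

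First, I would identify $A^G$. An element $x \in A$ lies in $A^G$ precisely when $X(g)\,x = x\,X(g)$ for every $g \in G$, so $A^G$ is the commutant in $\M_s(F)$ of the image of $X$. Because $X$ is an (absolutely) irreducible ordinary or projective representation of degree $s$ over the splitting field $F$, the matrices $\{X(g):g\in G\}$ span $\M_s(F)$ (by the density/Burnside theorem), and the (projective) form of Schur's lemma gives $A^G = F\cdot 1_A$. In particular $A^G$ is a local $F$-algebra and $1_A$ is its unique primitive idempotent.

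I would then apply the standard defect-group theorem: every primitive idempotent $i$ of $A^G$ admits a $p$-subgroup $V \leqslant G$ such that $i \in A^G_V$ and such that $V$ is contained in some $G$-conjugate of every subgroup $H \leqslant G$ satisfying $i \in A^G_H$. Specialising this to $i = 1_A$ gives exactly conclusions (1) and (2). For self-containment, the construction of $V$ is obtained by choosing $V$ minimal among the subgroups $K \leqslant G$ with $1_A \in A^G_K$ (which is possible since $K = G$ qualifies), writing $1_A = \Tr_V^G(a)$ with $a \in A^V$, and applying the Mackey decomposition of $\Res^G_V \Tr_V^G(a)$ through a Sylow $p$-subgroup $P$ of $V$. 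The Rosenberg lemma, valid because $A^G$ is local, forces one of the resulting trace terms to equal $1_A$; minimality of $V$ then forces $V = P$, so that $V$ is a $p$-group, and the same Rosenberg argument applied to a hypothetical $H$ with $1_A \in A^G_H$ yields the conjugation-containment statement in (2).

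The only step genuinely specific to the current setting is the computation $A^G = F\cdot 1_A$ via Schur's lemma; the rest is entirely standard $G$-algebra machinery, so I expect no serious obstacle beyond recalling the precise statement and proof of the defect-group theorem from the literature.
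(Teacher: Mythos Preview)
Your proposal is correct and follows essentially the same route as the paper. The paper simply cites references: for an ordinary representation it quotes Nagao--Tsushima, and for a projective representation it observes that $A$ is a primitive $G$-algebra and invokes Th\'evenaz's Lemma~18.2. Your argument is precisely the unpacking of the latter citation---showing $A^G = F\cdot 1_A$ via Schur's lemma is exactly what makes $A$ a primitive $G$-algebra, after which the defect-group theorem in Th\'evenaz yields (1) and (2). Your unified treatment of the ordinary and projective cases is, if anything, slightly cleaner than the paper's split into two separate citations.
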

\begin{proof}
  For the case when $X$ is a group representation, the results follows directly from \cite[Chapter IV, Theorem 2.2 and 3.3]{NagTsu}.
  When $X$ is an irreducible projective representation, $A$ is a {primitive} $G$-algebra in the sense of \cite[pp. 102]{Thevenaz}. 
  In this setting, any defect group V of A (as defined in \cite[§18]{Thevenaz}) satisfies the required conditions by \cite[Lemma 18.2]{Thevenaz}.
\end{proof}

 When $X$ is an irreducible group representation,
 Theorems 2.2 and 3.3 in Chapter IV of \cite{NagTsu} establish that $V$ is precisely a vertex of the simple $FG$-module corresponding to $X$. 
 By extension, we call $V$ a vertex of the representation $X$ itself. This terminology naturally carries over to the case when $X$ is an irreducible projective representation. 
 Lemma \ref{lem:H-tri2} shows that in both cases (ordinary or projective), the vertex $V$ is uniquely determined up to $G$-conjugacy.

\begin{prop}\label{prop:H-triVt}
  Keep the notation in the bijection $':\IBr(G\which \theta)\ra\IBr(H\which \varphi)$. 
  If $V_1$ is a vertex of $\chi\in\IBr(G\which\theta),$ then there exists a vertex $V_2$ of $\chi'$ such that $V_1N=V_2N.$
\end{prop}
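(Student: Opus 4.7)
The plan is to use the $G$-algebra framework of Lemma \ref{lem:H-tri2}, applied to representations decomposed as tensor products via the construction of the bijection ${}'$. Let $\PQ$ be an irreducible projective representation of $G/N = H/C$ with factor set $\alpha^{-1}$ such that $X := \PQ \otimes \Pj$ affords $\chi$ and $X' := \PQ \otimes \Pj'$ affords $\chi'$, with $\PQ$ viewed as a projective representation of $G$ (resp.\ $H$) via the quotient maps. The $G$-algebra $A := \M_{\chi(1)}(F)$ decomposes as a tensor product of $G$-algebras
\[
A = A_1 \otimes A_2, \qquad A_1 := \M_{\PQ(1)}(F), \quad A_2 := \M_{\Pj(1)}(F),
\]
with $G$ acting on $A_1$ by conjugation via $\PQ$ (which factors through $G/N$) and on $A_2$ via $\Pj$. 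Analogously, $A' := \M_{\chi'(1)}(F) = A_1 \otimes A'_2$ as an $H$-algebra, with the $A_1$-factor carrying the $G/N = H/C$-algebra structure inherited from the same $\PQ$.

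The heart of the plan is the following claim: if $V_1 \leqslant G$ is a vertex of $\chi$ in the sense of Lemma \ref{lem:H-tri2}, then $V_1 N/N$ is a vertex of $\PQ$ regarded as a projective representation of $G/N$; an analogous statement holds for $\chi'$ in $H$ relative to $C$ and $H/C$. Granting this claim, since $G = NH$ and $N \cap H = C$ supply the canonical identification $G/N = H/C$, and since the same $\PQ$ underlies both sides, the conjugacy class of vertices of $\PQ$ is a single family of $p$-subgroups of the quotient. Given $V_1$, one then selects a vertex $V_2$ of $\chi'$ whose image $V_2 C / C$ matches $V_1 N / N$ under the identification, so that
\[
V_1 N \;=\; V_2 C N \;=\; V_2 N,
\]
using $C \leqslant N$ together with $V_2 N \cap H = V_2 C$.

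The principal obstacle is proving the claim. The crucial input is that $\Pj|_N$ irreducibly affords $\theta$, so Schur's lemma gives $A_2^N = F \cdot 1$, whence $A^N = A_1$ as $G/N$-algebras. Using transitivity of the trace, $\Tr_V^G = \Tr_{VN}^G \circ \Tr_V^{VN}$, one verifies on pure tensors $a_1 \otimes a_2 \in (A_1 \otimes A_2)^V$ that the inner trace $\Tr_V^{VN}$ collapses the $A_2$-factor into the scalars $F \cdot 1 \subseteq A_2^N$; this translates the condition $1_A \in A_V^G$ into $1_{A_1} \in (A_1)^{G/N}_{VN/N}$, and by minimality $V_1 N / N$ coincides with a vertex of $\PQ$. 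The cleanest language for carrying out this reduction is the theory of defect groups of primitive interior $G$-algebras under tensor products, following \cite[\S 11, \S 18]{Thevenaz}; the technical task is to check that the defect subgroup of $A$ as a primitive $G$-algebra projects onto the defect subgroup of $A_1$ as a primitive $G/N$-algebra.
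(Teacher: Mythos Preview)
Your proposal is correct and follows essentially the same route as the paper's proof: both decompose the $G$-algebra $A=\M_{\chi(1)}(F)$ as $A_1\otimes A_2$ with $A_1=\M_{\PQ(1)}(F)$ carrying the $G/N$-action, observe via Schur's lemma that $A^N=A_1\otimes 1$, and then use compatibility of trace maps to show that a vertex of $\chi$ projects to a vertex of the common projective representation $\PQ$ in $G/N=H/C$; the same reasoning on the $H$-side yields $V_2$ with $V_2C/C=V_1N/N$, whence $V_1N=V_2N$.

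Two small points of presentation are worth tightening. First, your phrase ``on pure tensors $a_1\otimes a_2\in (A_1\otimes A_2)^V$ the inner trace $\Tr_V^{VN}$ collapses the $A_2$-factor into the scalars'' is imprecise: $A^V$ is not spanned by $V$-fixed pure tensors. What you actually need (and what the paper makes explicit via the isomorphisms $f_T:Q^{\bar T}\to P^T$, $x\mapsto x\otimes\I_{\theta(1)}$, and the commutative square with the trace maps) is simply that $\Tr_V^{VN}$ has image in $A^{VN}\subseteq A^N=A_1\otimes 1$, so $\Tr_{VN}^G$ on this image identifies with $\Tr_{VN/N}^{G/N}$ on $A_1$. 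Second, to conclude that $V_1N/N$ is \emph{minimal} (not merely that $1_{A_1}\in(A_1)^{G/N}_{V_1N/N}$), you need both directions of the equivalence $1_A\in A_{W}^G\Leftrightarrow 1_{A_1}\in(A_1)^{G/N}_{W/N}$ for $W\supseteq N$; the paper handles this cleanly by first finding the minimal $V\in\mS(G,N)$ with $1_P\in P_V^G$, then choosing $V_1\leqslant V$ minimal with $1_P\in P_{V_1}^G$ and arguing $NV_1=V$ by contradiction. Either packaging works, but your sketch should make the two-way correspondence explicit rather than appealing loosely to ``minimality''.
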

\begin{proof}
  Let $\bar{G}=G/N,$ and for $g\in G,$ denote its image in $\bar{G}$ by $\bar{g}.$
  Let $\PQ:\bar{G}\ra\GL_s(F)$ be a projective representation with factor set $\alpha^{-1}$ such that the group representation $$\PQ\otimes\Pj:G\ra \GL_{s\theta(1)}(F),g\mapsto \PQ(\bar{g})\otimes \Pj(g)$$ affords $\chi.$ 
  Note that $\GL_s(F)\otimes \GL_{\theta(1)}(F)$ is a subgroup of $\GL_{s\theta(1)}(F)$. 
  Let $$Q=\M_s(F)\quad \text{and}\quad P=\M_{s\theta(1)}(F)=\M_s(F)\otimes \M_{\theta(1)}(F).$$ And we regard $\GL_s(F)$ and $\GL_{s\theta(1)}(F)$ as the unit group of $Q$ and $P,$ respectively. 
  Then the functions $\PQ:\bar{G}\ra Q^{\times}$ and $\PQ\otimes\Pj:G\ra P^{\times}$ induce $\bar{G}$-algebra and $G$-algebra structures on $Q$ and $P,$ respectively, via conjugation actions. 
  For any $T\in\mS(G,N),$ define the function $f_T:Q^{\bar{T}}\ra P^T,x\mapsto x\otimes \I_{\theta(1)}.$

  {\itshape Step 1.} For any $T\in\mS(G,N),$ the function $f_T:Q^{\bar{T}}\ra P^T,x\mapsto x\otimes \I_{\theta(1)}$ is an isomorphism of $F$-vector spaces, and the following diagram commutes:
   
  
   $$\begin{tikzpicture}
\node (1) at(0,0) {$Q^{\bar{G}}$};
\node  (2) at(2,0) {$P^G$};
\node  (3) at(0,-2) {$Q^{\bar{T}}$};
\node  (4) at(2,-2) {$P^T$};
\draw[->](3)--(1) ;
\draw[->]  (4)--(2) ;
\draw[->] (1)--(2);
\draw[->] (3)--(4);
\coordinate [label=left:$\Tr_{\bar{T}}^{\bar{G}}$] (A) at (0,-1);
\coordinate [label=right:$\Tr_T^G$] (B) at (2,-1);
\coordinate [label=above:$f_G$] (C) at (1,0);
\coordinate [label=above:$f_T$] (D) at (1,-2);
\end{tikzpicture}.$$

First, for any $x\in Q^{\bar{T}},$ and any $g\in T,$ 
\begin{align*}
  &(\PQ\otimes \Pj)(g)^{-1}
  \cdot (x\otimes \I_{\theta(1)})
  \cdot (\PQ\otimes \Pj)(g)\\
    =&(\PQ(\bar{g})\otimes \Pj(g))^{-1}
\cdot (x\otimes \I_{\theta(1)})
\cdot (\PQ(\bar{g})\otimes \Pj(g)) \\
   =& (\PQ(\bar{g})^{-1} x  \PQ(\bar{g})) 
   \otimes (\Pj(g)^{-1}\I_{\theta(1)}\Pj(g))
   =x\otimes \I_{\theta(1)}.
\end{align*}
Thus $f_T(x)\in P^T.$ 
The function $f_T$ is clearly an injective $F$-linear homomorphism.
To prove it is surjective, we show any $y \in P^T$ has the form $y=x\otimes \I_{\theta(1)}$ for some $x\in \PQ^{\bar{T}}.$
Since the set $\{\I_s\otimes \Pj(n)\which n\in N\}$ generates the subalgebra $\I_s\otimes \M_{\theta(1)}(F),$ we conclude from \cite[Chapter II, Lemma 4.1]{NagTsu} that $$P^N=\C_P(\I_s\otimes \M_{\theta(1)}(F))
=\M_s(F)\otimes \I_{\theta(1)}.$$ 
Therefore any $y\in P^T\subseteq P^N$ admits a decomposition $y=x\otimes I_{\theta(1)}$ for some $x\in\M_s(F).$
As $y$ is $T$-invariant, we see $x$ is $\bar{T}$-invariant.
Finally, the equation $f_G\cdot \Tr_{\bar{T}}^{\bar{G}}=\Tr_T^G\cdot f_T$ follows by direct computation, which completes the proof of this step.

Let $V \in \mathcal{S}(G,N)$ be a minimal subgroup satisfying $1_P \in P_V^G$. Through the isomorphisms $\{f_T \which T \in \mathcal{S}(G,N)\}$, we conclude that $\overline{V}$ is the minimal subgroup of $\overline{G}$ with $1_Q \in Q_{\overline{V}}^{\overline{G}}$. By Lemma \ref{lem:H-tri2}, this shows that $\overline{V}$ is a vertex of the projective representation $\PQ.$
Let $V_1$ be a minimal subgroup of $V$ such that $1_P\in P_{V_1}^G.$ Then $V_1$ is a vertex of $\chi.$  
We want to prove that $V=NV_1.$ 
If $NV_1\lneqq V,$ then we can deduce from $1_P\in P^G_{V_1}$ that $1_P\in P^G_{NV_1}.$ 
But this contradicts the definition of $V.$ Thus we have proved $V=NV_1.$

We have proved that the vertex $V_1$ of $\chi$ and the vertex $\bar{V}$ of $\PQ$ satisfy $\bar{V}=V_1N/N.$ 
In a similar way we can prove that the vertex $V_2$ of $\chi'$ satisfies $\bar{V}=V_2C/C.$ That is, $V_1N=V_2N,$ as desired.
\end{proof}

The following result strengthens \cite[Theorem 1.10]{NSV20}.
A difference in our case is that we need to consider the action of group automorphisms.

\begin{thm}\label{thm:H-tri}
   Let $(G,N,\theta)_{\hH}$ and $(H,C,\varphi)_{\hH}$ be $\hH$-triples such that $(G,N,\theta)_{\hH}\geqslant (H,C,\varphi)_{\hH}.$ Then for any $S\in\mS(G,N),$ there exists an $\hH$-equivariant bijection 
   $$\varDelta_S: \IBr( S\which\theta^{\hH})\ra \IBr(S\cap H\which\varphi^{\hH}),$$ 
  such that the following conditions hold.
   \begin{enumerate}
     \item For any $\chi\in\IBr(S\which\theta^{\hH})$ and $g\in H,$ we have $\varDelta_S(\chi)^g= \varDelta_{(S^g)}(\chi^g).$ 
     \item For any  $\chi\in\IBr(S\which \theta^{\hH})$ and  $T\in\mathcal{S}(S, N)$,
           $$\varDelta_S(\chi)_{T\cap H}= \varDelta_T(\chi_T).$$   
     \item $\varDelta_N(\theta)=\varphi.$
     \item Let $\chi\in\IBr( S\which\theta^{\hH})$, and let $V_1$ be a vertex of $\chi.$ Then there exists a vertex $V_2$   of $\varDelta_S(\chi)$ such that $V_1N=V_2N.$ 
   \end{enumerate}
 \end{thm}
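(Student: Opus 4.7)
The plan is to build each $\varDelta_S$ from the pair $(\Pj,\Pj')$ giving the partial order $(G,N,\theta)_{\hH}\geqslant(H,C,\varphi)_{\hH}$, combined with the Clifford correspondence for Brauer characters. For $S\in\mathcal{S}(G,N)$, write $R=S\cap H$, so that $S=NR$ and $R\supseteq C$; since $\theta$ is invariant under $N$, one has $S_\theta=NR_\theta$, and condition~(2) of Definition~\ref{def:H-tri:ord} yields $R_\theta=R_\varphi$. The restrictions $\Pj|_{S_\theta}$ and $\Pj'|_{R_\varphi}$ are then projective representations associated with $\theta$ and $\varphi$ whose factor sets match under the identification $S_\theta/N\cong R_\varphi/C$, by condition~(3). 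The construction preceding Lemma~\ref{lem:H-tri1} (applied to $S_\theta$ and $R_\varphi$ in place of $G$ and $H$) yields a bijection $\IBr(S_\theta\which\theta)\to\IBr(R_\varphi\which\varphi)$; composing with the Clifford correspondences produces a bijection $\varDelta_S^\theta\colon\IBr(S\which\theta)\to\IBr(R\which\varphi)$.

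To promote $\varDelta_S^\theta$ to the required $\hH$-equivariant bijection $\varDelta_S$, I set $\varDelta_S(\chi^\sigma)=\varDelta_S^\theta(\chi)^\sigma$ for $\sigma\in\hH$ and $\chi\in\IBr(S\which\theta)$. Well-definedness reduces to showing that when $\theta^\sigma=\theta^\tau$, the maps $\varDelta_S^\theta$ and its $\sigma\tau^{-1}$-conjugate agree; this is precisely Lemma~\ref{lem:H-tri1} applied with $a=(\sigma\tau^{-1},1)\in(\hH\times H)_\theta$, where condition~(4) of Definition~\ref{def:H-tri:ord} supplies $\mu_a=\mu'_a$. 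Property~(3) is then immediate, since for $S=N$ we have $R=C$, $S_\theta=N$, $R_\varphi=C$, and the associated $\PQ$ is the trivial projective representation of the trivial quotient, so $\varDelta_N(\theta)=\varphi$. Property~(2) follows because if $T\in\mathcal{S}(S,N)$ and $\PQ\otimes\Pj|_{S_\theta}$ affords the Clifford inflation of $\chi$, then restricting both tensor factors to $T_\theta$ gives the representation whose Clifford deflation produces $\varDelta_T$ on each irreducible constituent of $\chi_T$; combining with the corresponding restriction on the $\varphi$-side yields the desired identity $\varDelta_S(\chi)_{T\cap H}=\varDelta_T(\chi_T)$. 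Property~(4) is a direct application of Proposition~\ref{prop:H-triVt} at the level of the bijection $\IBr(S_\theta\which\theta)\to\IBr(R_\varphi\which\varphi)$, together with the standard fact that Clifford induction from an inertia subgroup preserves vertices of irreducible Brauer characters.

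The main obstacle will be property~(1), conjugation compatibility by an element $g\in H$, because such a $g$ need not normalize $S$ or stabilize $\theta$ (only the orbit $\theta^{\hH}$ is $G$-stable). The plan here is to choose $\sigma\in\hH$ with $\theta^g=\theta^{\sigma^{-1}}$, so that the pair $a=(\sigma,g)$ lies in $(\hH\times H)_\theta$, and then observe that the construction of $\varDelta_{S^g}$ uses the restrictions $\Pj|_{(S^g)_\theta}$ and $\Pj'|_{(R^g)_\varphi}$, which differ from the $a$-conjugates of $\Pj|_{S_\theta}$ and $\Pj'|_{R_\varphi}$ by the scalar cocycles $\mu_a$ and $\mu'_a$. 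Since condition~(4) of Definition~\ref{def:H-tri:ord} gives $\mu_a=\mu'_a$ on $H_\theta$, an argument parallel to the proof of Lemma~\ref{lem:H-tri1}, adapted to track the action of $g$ on the Clifford data, will deliver the identity $\varDelta_{S^g}(\chi^g)=\varDelta_S(\chi)^g$. The delicate bookkeeping lies in matching the two Clifford correspondences under the conjugation and verifying that the cocycle scalars cancel uniformly across both sides.
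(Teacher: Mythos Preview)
Your overall strategy matches the paper's: build the bijection from $(\Pj,\Pj')$ at the level of $S_\theta$, combine with Clifford correspondence, and extend over the $\hH$-orbit. The paper organizes this by first fixing representatives of $H$-conjugacy classes in $\mS(G_\theta,N)$ and defining the maps on representatives, which bakes the $\hH\times H$-equivariance into the construction; your direct approach is equivalent in spirit but shifts the burden to verification afterwards.

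There is, however, a genuine gap in your treatment of property~(2). Your argument (``restricting both tensor factors to $T_\theta$'') only captures the contribution to $\chi_T$ coming from the single double coset $S_\theta T$. For $\chi\in\IBr(S\which\theta)$ with Clifford correspondent $\psi\in\IBr(S_\theta\which\theta)$, Mackey's formula gives
\[
\chi_T=(\psi^S)_T=\sum_j\bigl((\psi^{g_j})_{T_\theta}\bigr)^T,
\]
where the $g_j$ run over representatives of $S/S_\theta T$ (recall $S_\theta\unlhd S$). For $g_j\notin S_\theta T$ the summand lies over $\theta^{g_j}\ne\theta$, and to apply $\varDelta_T$ to its irreducible constituents you must invoke the $\hH$-extension \emph{together with} conjugation equivariance by $g_j\in S\cap H$, which is precisely property~(1). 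The paper devotes three of its six steps (Steps~4--6) to~(2), and its Step~6 argument explicitly uses the already-established~(1) to match the Mackey terms on both sides. So~(2) is not easier than~(1); it depends on it.

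A related smaller gap appears in your well-definedness check for the $\hH$-extension. You reduce to the case $\theta^\sigma=\theta^\tau$, but $\chi^\sigma=\tilde\chi^\tau$ with $\chi,\tilde\chi\in\IBr(S\which\theta)$ only forces $\theta^\sigma$ and $\theta^\tau$ to be $S$-conjugate, not equal. Handling the general case requires Lemma~\ref{lem:H-tri1} with $a=(\sigma\tau^{-1},r^{-1})\in(\hH\times H)_\theta$ for some $r\in S\cap H$ carrying $\theta$ to $\theta^{\sigma\tau^{-1}}$, and one must also track how the Clifford correspondent changes under $r$. This is again the same mechanism that underlies~(1), confirming that the conjugation compatibility is logically prior to everything else in the argument.
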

\begin{proof}
  Suppose that $(\Pj,\Pj')$ gives $(G,N,\theta)_{\hH}\geqslant (H,C,\varphi)_{\hH}.$
  The construction of character bijections $\Delta_S$ proceeds through three steps.

  {\itshape Step 1}. Let $\{S_1=N,S_2,\cdots,S_l\}$ be a complete set of representatives for the $H$-conjugacy classes of subgroups in $\mS(G_{\theta},N).$ 
  For each $1\leqslant i\leqslant l,$ let $\Delta_{S_i}^1:\IBr(S_i\which\theta)\ra\IBr(S_i\cap H\which \varphi) $ be the bijection induced by $(\Pj_{S_i},\Pj_{S_i\cap H}').$ 
  (Here $\Pj_{S_j}$ is the restriction of $\Pj$ to $S_j.$)
  This bijection  $\Delta_{S_i}^1$ is $(\hH\times \N_H(S_i))_{\theta}$-equivariant.

  For any $a\in (\hH\times H)_{\theta},$ let $\mu_a:G_{\theta}\ra F^{\times}$ and $\mu_a':H_{\theta}\ra F^{\times}$ be the functions determined by $\Pj^a\sim\mu_a\Pj$ and $\Pj'^{a}\sim\mu_a'\Pj',$ respectively. 
  Notice that $\mu_a $ and $\mu_a'$ agree on  $H_{\theta}$ by the assumption.
  Now fix any $i,$ and  let  $a\in(\hH\times\N_H(S_i))_{\theta}.$ 
  Since $(\Pj_{S_i})^a\sim\mu_{a,S_i}\Pj_{S_i}$ and $(\Pj'_{S_i\cap H})^a\sim\mu'_{a,S_i\cap H}\Pj'_{S_i\cap H},$ where $\mu_{a,S_i}$ is the restriction of $\mu_a$ to $S_i,$ by Lemma \ref{lem:H-tri1} we have that the bijection $\Delta_{S_i}^1$ is $(\hH\times \N_H(S_i))_{\theta}$-equivariant. 
  
   {\itshape Step 2}. We give the definition of $\varDelta_S:\IBr(S\which \theta^{\hH})\ra\IBr(S\cap H\which \varphi^{\hH})$ when $S\in\mS(G_{\theta},N).$ Let $S\in\mS(S_{\theta},N).$ 
   For any $\chi\in\IBr(S\which \theta^{\hH}),$ define $\varDelta_S(\chi)=\varDelta_{S^a}^1(\chi^a)^{a^{-1}},$ where $a\in\hH\times H$ is such that $S^a\in\{S_1,\cdots,S_l\}$ and $\chi^a\in\IBr(S^a\which \theta).$ Then $\Delta_S$ is a well-defined bijection, and for any $a\in\hH\times H$ we have $\varDelta_S(\chi)^a=\varDelta_{S^a}(\chi^a).$ 
   
   Let $S\in\mS(G_{\theta},N),$ and $\chi\in\IBr(S\which \theta^{\hH}).$ Then there exists $a\in\hH\times H$  such that  $S^a\in\{S_1,\cdots,S_l\}$ and $\chi^a\in\IBr(S^a\which \theta).$ 
   We now prove that $\varDelta_S(\chi):=\varDelta_{S^a}^1(\chi^a)^{a^{-1}}
   \in\IBr(S'\which \varphi^{\hH})$ is independent of the choice of $a.$ Suppose that $a'\in\hH\times H$ is another element such that  $S^{a'}\in\{S_1,\cdots,S_l\}$ and $\chi^{a'}\in\IBr(S^{a'}\which \theta).$ 
   Since any different two subgroups in $\{S_1,\cdots,S_l\}$ are not $H$-conjugate, we see $S^a=S^{a'}.$
   Thus we can let $a'=ax,$ where $x\in\hH\times H$ stabilizes $S^a.$
   Now $\chi^a,\chi^{ax}\in\IBr(S^a\which \theta),$ we can deduce that $\theta^x=\theta,$ thus $x\in\big(\hH\times \N_H(S^{a})\big)_{\theta}.$ 
   Since the bijection $\varDelta_{S^a}^1$ is $\big(\hH\times \N_H((S^a)')\big)_{\theta}$-equivariant, we have
   $$\varDelta_{S^a}^1(\chi^{a'})^{a'^{-1}}
   =\varDelta_{S^a}^1(\chi^{ax})^{x^{-1}a^{-1}}
   =\varDelta_{S^a}^1(\chi^a)^{a^{-1}}.$$
   Thus $\varDelta_S$ is well-defined.
   In order to show that $\varDelta_S$ is a bijection, we can construct the inverse map $\Delta_S':\IBr(S\cap H\which \varphi^{\hH})\ra\IBr(S\which \theta^{\hH})$ in a similar way like $\varDelta_S,$ and prove that $\varDelta_S\varDelta'_S$ and $\varDelta'_S\varDelta_S$ are both identity maps.
  What we left is to prove that $\varDelta_S(\chi)^a=\varDelta_{S^a}(\chi^a)$ holds for any $S\in\mS(G_{\theta},N),\chi\in\IBr(S\which\theta^{\hH})$ and $a\in\hH\times H.$ 
   Let $y\in\hH\times H$ be such that $S^y\in\{S_1,\cdots,S_l\}$ and $\chi^y\in\IBr(S^y\which \theta).$ Then by definition we have $\varDelta_S(\chi)=\varDelta_{S^y}^1(\chi^y)^{y^{-1}}.$ 
   Notice that $\chi^a\in\IBr(S^a\which\theta^{\hH})$ and $a^{-1}y\in\hH\times H$ is such that $S^{a(a^{-1}y)}=S^y\in\{S_1,\cdots,S_l\}$ and $(\chi^a)^{a^{-1}y}=\chi^y\in\IBr(S^y\which\theta),$ thus by definition we have $\varDelta_{S^a}(\chi^a) =\varDelta^1_{S^y}(\chi^y)^{y^{-1}a}.$
   Since $\varDelta^1_{S^y}(\chi^y)^{y^{-1}a} =\Delta_S(\chi)^a,$ we have $\varDelta_{S^a}(\chi^a)=\Delta_S(\chi)^a.$
   
   {\itshape Step 3}. We give the definition of $\varDelta_S:\IBr(S\which \theta^{\hH})\ra\IBr(S\cap H\which \varphi^{\hH})$ for any $S\in\mS(G,N).$ Let $S\in\mS(S,N)$ and $\chi\in\IBr(S\which \theta^{\hH}).$
   Choose $\theta_1$ to be an irreducible constituent of $\chi_N,$ thus $\theta_1\in\theta^{\hH}.$ Let $\chi_1\in\IBr(S_{\theta}\which \theta_1)$ be the Clifford correspondent of $\chi$ (here $S_{\theta}$ is the stabilizer of $\theta$ in $S,$ and notice that $S_{\theta}=S_{\theta_1}\zg S$).  We define $\varDelta_S(\chi)=\varDelta_{S_{\theta}}(\chi_1)^{S\cap H}.$ 
 Then $\Delta_S$ is a well-defined bijection, and for any $S\in\mS(G,N),\chi\in\IBr(S\which \theta^{\hH})$ and $a\in\hH\times H$ we have $\varDelta_S(\chi)^a=\varDelta_{S^a}(\chi^a).$ 
 
  Firstly, we prove that $\varDelta_S(\chi):=\varDelta_{S_{\theta}}(\chi_1)^{S\cap H}$ is independent of the choice $\theta_1$. Suppose that $\theta_2$ is another irreducible constituent of $\chi_N.$ By Clifford theory, there exists $g\in S\cap H$ such that $\theta_2=\theta_1^g.$
   As $\chi_2:=\chi_1^g\in\IBr(S_{\theta}\which \theta_2)$ is such that $\chi_2^S=\chi,$ we see that $\chi_2\in\IBr(S_{\theta}\which \theta_2)$ is the Clifford correspondent of $\chi.$ 
   We need to prove that $\varDelta_{S_{\theta}}(\chi_2)^{S'}
   =\varDelta_{S_{\theta}}(\chi_1)^{S'}.$
   Since
   $$\varDelta_{S_{\theta}}(\chi_2)^{S\cap H}
   =\varDelta_{S_{\theta}}(\chi_1^g)^{S\cap H}
   =(\varDelta_{S_{\theta}}(\chi_1)^g)^{S\cap H}
   =\varDelta_{S_{\theta}}(\chi_1)^{S\cap H},$$
where the second equation comes from results in {\itshape Step 2,}
   we see $\varDelta_S:\IBr(S\which\theta^{\hH})
   \ra\IBr(S\cap H\which\varphi^{\hH})$ is well-defined. 
   Notice that when $S\in\mS(G_{\theta},N),$ the definition of $\varDelta_S$ is the same with the definition in {\itshape Step 2,} thus there is no confusion with the notation.
   To show $\Delta_S$ is a bijection, we can construct the inverse map $\varDelta'_S:\IBr(S\cap H\which\varphi^{\hH})
   \ra\IBr(S\which\theta^{\hH})$ in a similar way like $\varDelta_S$ and prove that $\varDelta_S\varDelta'_S$ and $\varDelta'_S\varDelta_S$ are identity maps.
   Now we prove for any $a\in\hH\times H$ we have  $\varDelta_S(\chi)^a=\varDelta_{S^a}(\chi^a).$
   Notice that $(\theta_1)^a$ is an irreducible constituent of $(\chi^a)_N,$ and $(\chi_1)^a\in\IBr(S_{\theta}^a\which \theta^a_1)$ is the Clifford correspondent of $\chi.$ Thus by definition we have $\varDelta_{S^a}(\chi^a) =\varDelta_{S^a_{\theta}}(\chi_1^a)^{S^a\cap H}.$ 
   Since $$\varDelta_{S^a_{\theta}}(\chi_1^a)^{S^a\cap H}
   =(\varDelta_{S_{\theta}}(\chi_1)^a)^{S^a\cap H}
   =(\varDelta_{S_{\theta}}(\chi_1)^{S\cap H})^a
   =\varDelta_S(\chi)^a,$$
   where in the second equation we need to notice that $(S\cap H)^a=S^a\cap H,$ we have $\varDelta_{S^a}(\chi^a)=\varDelta_S(\chi)^a.$
   
   Now we have given the definition of $\varDelta_S$ for any $S\in\mS(G,N),$ and have proved that $\varDelta_S(\chi)^a=\varDelta_{S^a}(\chi^a)$ holds for any $\chi\in\IBr(S\which \theta^{\hH})$ and $a\in\hH\times H.$ 
   This implies that the map $\varDelta_S$ is $\hH$-equivariant and condition (1) holds.  It is also directly seen from the definition that $\varDelta_N(\theta)=\varphi.$
   
   Now we are going to prove (4). Let $\chi\in\IBr(S\which\theta^{\hH}),$ and $V_1$ be a vertex of $\chi.$ Suppose that $\theta_1$ is an irreducible constituent of $\chi_N,$ and $\chi_1\in\IBr(S_{\theta}\which \theta_1)$ is the Clifford correspondent of $\chi.$ Let $a\in\hH\times H$ be such that $S_{\theta}^a\in\{S_1,\cdots,S_l\}$ and $\chi_1^a\in\IBr(S_{\theta}^a\which \theta).$ 
   Recall that $\varDelta_S(\chi)
   =(\varDelta^1_{S_{\theta}^a}(\chi_1^a)^{a^{-1}})^{S\cap H}$ by our definition.
   Since Clifford correspondent characters have a common vertex, we may assume that $V_1\leqslant S_{\theta}$ and is a vertex of $\chi_1.$ Thus $V_1^a$ is a vertex of $\chi_1^a.$ 
   By Proposition \ref{prop:H-triVt}, there exists a vertex $T$ of $\varDelta^1_{S^a_{\theta}}(\chi_1^a)$ such that $TN=V_1^aN.$ Let $V_2=T^{a^{-1}}.$ 
   Then $V_2$ is a vertex of $\varDelta^1_{S^a_{\theta}}(\chi_1^a)^{a^{-1}},$ and hence also a vertex of $\varDelta_S(\chi)
   =(\varDelta^1_{S_{\theta}^a}(\chi_1^a)^{a^{-1}})^{S\cap H}$.
   By the conjugation action of $a^{-1}$ on the equation $TN=V_1^aN$ we have $V_2N=V_1N.$ 
   
   What we left is to prove (2), and the proof is a little complicated. We divide it into 3 steps, which are from {\itshape Step 4} to {\itshape Step 6.}
   Let $\bar{G}=G/N$ and $\bar{g}$ be the image of $g\in G$ in $\bar{G}.$
   
   {\itshape Step 4.} For any $S\in\mS(G_{\theta},N),$ the bijection $\varDelta_S:\IBr(S\which\theta^{\hH})
   \ra\IBr(S\cap H\which \varphi^{\hH})$ restricts to a bijection $\varDelta_S^1:\IBr(S\which\theta)
   \ra\IBr(S\cap H\which\varphi)$ induced by $(\Pj_S,\Pj'_{S\cap H}).$ 
   
Let $S\in\mS(G_{\theta},N)$ and $\chi\in\IBr(S\which\theta).$ Suppose that $a\in\hH\times H$ is such that $S^a\in\{S_1,\cdots,S_l\}$ and $\chi^a\in\IBr(S^a\which\theta).$ 
Then by definition, we have $\varDelta_S(\chi)=\varDelta^1_{S^a}(\chi^a)^{a^{-1}}.$
Since $\chi$ and $\chi^a$ both lie over $\theta$ we have $a\in(\hH\times H)_{\theta}=(\hH\times H)_{\varphi}.$
Noticing that $\varDelta^1_{S^a}(\chi^a)\in\IBr(S^a\cap H\which \varphi),$ we have $\varDelta_S(\chi)\in\IBr(S\cap H\which\varphi).$ 
Because $|\IBr(S\which\theta)|=|\IBr(S\cap H\which\varphi)|,$ we see that $\varDelta_S$ restricts to a bijection $\varDelta_S^1:\IBr(S\which\theta)
\ra\IBr(S\cap H\which\varphi).$
Now we prove that $\varDelta_S^1$ is induced by $(\Pj_S,\Pj'_{S\cap H}).$ 
Notice that this is our definition when $S\in\{S_1,\cdots,S_l\}.$ 
Now $S$ is any subgroup in $\mS(G_{\theta},N).$ 
Let $\PQ$ be an irreducible projective representation of $\bar{S}$ with factor set $\alpha^{-1}_{\bar{S}\times\bar{S}}$ such that $\PQ\otimes \Pj_S$ affords $\chi,$ where $\alpha$ is the factor set associated with $\Pj$ and $\alpha_{\bar{S}\times\bar{S}}$ is the restriction of $\alpha$ to $\bar{S}\times\bar{S}$ (recall that we can regard $\alpha$ as a factor set of $\bar{G}_{\theta}$).
We need to prove that $\PQ\otimes\Pj'_{S\cap H}$ affords $\varDelta_{S^a}^1(\chi^a)^{a^{-1}}.$
Notice that $(\PQ\otimes\Pj_S)^a$ affords $\chi^a,$ and 
$$(\PQ\otimes\Pj_S)^a=\PQ^a\otimes(\Pj_S)^a
\sim \PQ^a\otimes \mu_{a,S^a}\cdot \Pj_{S^a}
=\PQ^a\cdot\mu_{a,S^a}\otimes \Pj_{S^a}.$$
Here $\PQ^a\cdot \mu_{a,{S^a}}:\overline{S^a}\ra \GL_s(F)$
($s$ is the degree of $\PQ$) is defined by $\PQ^a\cdot \mu_{a,S^a}(\bar{h})
=\PQ(\bar{h}^{\bar{g}^{-1}})^{\sigma}
\cdot \mu_a(h)$ for $h\in S^a$  and $a=(\sigma, g).$ 
Since $\PQ^a\cdot \mu_{a,S^a}\otimes \Pj_{S^a}$ is a group representation, we have that $\PQ^a\cdot\mu_{a,S^a}$ is a projective representation of $\overline{S^a}$ with factor set $\alpha^{-1}_{\overline{S^a}\times\overline{S^a}}.$ 
Thus by definition $\PQ^a\cdot \mu_{a,S^a}\otimes\Pj'_{S^a\cap H}$ affords $\varDelta_{S^a}^1(\chi^a)$. 
On the other hand, 
$$\PQ^a\cdot \mu_{a,S^a}\otimes\Pj'_{S^a\cap H}
=\PQ^a\cdot \mu'_{a,S^a\cap H}\otimes\Pj'_{S^a\cap H}
=\PQ^a \otimes \mu'_{a,S^a\cap H}\cdot \Pj'_{S^a\cap H}
=\PQ^a \otimes (\Pj'_{S\cap H})^a
= (\PQ \otimes\Pj'_{S\cap H})^a.$$
Hence $\PQ\otimes\Pj'_{S\cap H}$ affords $\varDelta_{S^a}^1(\chi^a)^{a^{-1}}.$   

{\itshape Step 5.} If $S,T\in\mS(G_{\theta},N)$ are such that $T\leqslant S,$ then for any $\chi\in\IBr(S\which\theta^{\hH})$ we have $\varDelta_S(\chi)_{T\cap H}=\varDelta_T(\chi_{T}).$  

Since the maps $\varDelta_S,\varDelta_T$ are $\hH$-equivariant, we may assume that $\chi\in\IBr(S\which\theta).$ 
Thus we only need to prove that $\varDelta_S^1(\chi)_{T\cap H}=\varDelta_T^1(\chi_T).$
Notice that in {\itshape Step 4}, we have proved that the bijections $\varDelta^1_S$ and $\varDelta^1_T$ are induced by $(\Pj_S,\Pj'_{S\cap H})$ and $(\Pj_T,\Pj'_{T\cap H}),$ respectively.
Suppose that $\PQ$ is an irreducible projective representation of $\bar{S}$ with factor set $\alpha^{-1}_{\bar{S}\times\bar{S}}$ such that the group representation $\PQ\otimes \Pj_S$ affords $\chi.$ 
Thus the group representation $\PQ_{\bar{T}}\otimes\Pj_T$ affords $\chi_T.$
Let $\PQ_1,\cdots,\PQ_t$ be all the irreducible constituents of $\PQ_{\bar{T}},$ that is, $\PQ_{\bar{T}}$ is similar with a projective representation like the following:
$$\begin{pmatrix}
\PQ_1 & * & \cdots & * \\
\textbf{0} & \PQ_2  & \cdots & * \\
\vdots & \vdots & \ddots & \vdots \\
\textbf{0} &  \textbf{0} & \cdots & \PQ_t 
 \end{pmatrix},$$ 
where each $\PQ_i,1\leqslant i\leqslant t,$  is an irreducible projective representation of $\bar{T}$ with factor set $\alpha^{-1}_{\bar{T}\times\bar{T}}.$
Then $\PQ_1\otimes\Pj_T,\cdots,\PQ_t\otimes\Pj_T$ are exactly all the irreducible constituents of $\PQ_{\bar{T}}\otimes\Pj_T.$
For any $1\leqslant i\leqslant t,$ let $\zeta_i\in\IBr(T\which\theta)$ be the irreducible Brauer character afforded by $\PQ_i\otimes\Pj_T.$ Then we have $\chi_T=\zeta_1+\cdots+\zeta_t.$ 
In a similar way, $\PQ_1\otimes\Pj'_{T\cap H},\cdots,\PQ_t\otimes \Pj'_{T\cap H}$ are exactly all the irreducible constituents of $\PQ_{\bar{T}}\otimes\Pj'_{T\cap H}.$
 Since the bijection $\varDelta_T^1$ is induced by $(\Pj_T,\Pj'_{T'}),$ and noticing that $\PQ_{\bar{T}}\otimes\Pj'_{T\cap H}$ affords $\varDelta^1_S(\chi)_{T\cap H},$ we have $\varDelta^1_S(\chi)_{T'}
=\varDelta^1_T(\zeta_1)+\cdots+\varDelta^1_T(\zeta_t)
=\varDelta^1_T(\chi_T).$ This completes the proof of this step.

{\itshape Step 6.} For any $S,T\in\mS(G,N)$  such that $T\leqslant S,$ and any $\chi\in\IBr(S\which\theta^{\hH}),$ we have $\varDelta_S(\chi)_{T\cap H}=\varDelta_T(\chi_{T}).$ 

Let $\{g_1,\cdots,g_k\}$ be a complete set of representatives of left $S_{\theta}T$-cosets in $S,$ and we require that each $g_j,j=1,\cdots,k,$ belongs to $S\cap H.$
Thus $S=g_1S_{\theta}T\sqcup\cdots\sqcup g_kS_{\theta}T$ (a disjoint union).  
Since $S_{\theta}\zg S,$ we have $g_jS_{\theta}T=S_{\theta}g_jT.$
Notice that $\{g_1,\cdots,g_k\}$ is also a complete set of representatives of left $(S_{\theta}\cap H)(T\cap H)$-cosets in $S\cap H.$
This is because $(S_{\theta}\cap H)(T\cap H)=S_{\theta}T\cap H,$ otherwise, $(S_{\theta}\cap H)(T\cap H)\lneqq S_{\theta}T\cap H,$ and hence $$S_{\theta}T=N(S_{\theta}\cap H)N(T\cap H)=N(S_{\theta}\cap H)(T\cap H)\lneqq N(S_{\theta}T\cap H)=S_{\theta}T,$$ a contradiction.
Let $\theta_1\in\theta^{\hH}$ be an irreducible constituent of $\chi_N$ and $\psi\in\IBr(S_{\theta}\which\theta_1)$ be the Clifford correspondent of $\chi.$ 
Thus by definition we have $\varDelta_S(\chi)=\varDelta_{S_{\theta}}(\psi)^{S\cap H}.$
By Mackey formula we have 
$$\chi_T=(\psi^S)_T=\sum_{j}((\psi^{g_j})_{T_{\theta}})^T,$$ where $j$ ranges over $1,\cdots,k.$
Similarly we have
$$\varDelta_S(\chi)_{T\cap H}
=(\varDelta_{S_{\theta}}(\psi)^{S\cap H})_{T\cap H}
=\sum_{j}\big((\varDelta_{S_{\theta}}
 (\psi)^{g_j})_{T_{\theta}\cap H}\big)^{T\cap H}
=\sum_j\big(\varDelta_{S_{\theta}}
 (\psi^{g_j})_{T_{\theta}\cap H}\big)^{T\cap H},$$
where the last equation comes from condition (1) that has been proved.
On the other hand we have 
$$\varDelta_T(\chi_T)
=\varDelta_T\big(\sum_j((\psi^{g_j})_{T_{\theta}})^T\big)
=\sum_j\varDelta_T\big( ((\psi^{g_j})_{T_{\theta}})^T 
 \big)
=\sum_j \varDelta_{T_{\theta}}((\psi^{g_j})_{T_{\theta}})^{T\cap H},$$
where the last equation is  actually the definition of $\varDelta_T,$ this is because that the irreducible constituents of $(\psi^{g_j})_{T_{\theta}}$ are exactly the Clifford correspondents of irreducible constituents of $((\psi^{g_j})_{T_{\theta}})^T,$ and $\varDelta_T,\varDelta_{T_{\theta}}$ are $\mathbb{N}$-linear.
In {\itshape Step 5}, we have proved that $$\varDelta_{T_{\theta}}((\psi^{g_j})_{T_{\theta}})
=\varDelta_{S_{\theta}}(\psi^{g_j})_{T_{\theta}\cap H}.$$
Thus we have $\varDelta_T(\chi_T)=\varDelta_S(\chi)_{T\cap H}.$
\end{proof}

\section{The  cohomology class associated with an $\hH$-triple}

Let \(^*: \mO \ra F\) be the natural ring homomorphism.
Recall \(F\) is a finite field.  
Denote by \(\Fp\) the prime subfield of \(F\), i.e., the minimal subfield of \(F\) containing \(p\) elements. 
Given a finite group \(N\) and \(\theta \in \IBr(N)\), let \(\Fp[\theta]\) be the field extension of \(\Fp\) in \(F\) obtained by adjoining the values \(\{\theta(x)^* \which x \in N_{p'}\}\).  
Set \(E = \Fp[\theta]\). By Brauer's theorem (see \cite[Chapter I, Theorem 19.3]{Feit}), there exists a representation \(X: N \ra \GL_m(F)\) affording \(\theta\) such that \(X(n) \in \GL_m(E)\) for all \(n \in N\), where \(m = \theta(1)\).  
In this case, we say \(X\) is \emph{realized} in \(E\) and may write \(X: N \ra \GL_m(E)\).  
If \(X': N \ra \GL_m(F)\) is another representation affording \(\theta\) and realized in \(E\), then by the Skolem-Noether theorem (see \cite[Chapter II, Theorem 4.6]{NagTsu}), there exists \(T \in \GL_m(E)\) such that \(X' = TXT^{-1}\).

In Section 2, we introduced $G$-algebras over $F$. This notion naturally extends to arbitrary fields. Given an $\hH$-triple $(G,N,\theta)_{\hH}$, we associate to it a $G$-algebra over $\Fp$, constructed as follows.

Let $(G,N,\theta)_{\hH}$ be an $\hH$-triple with $E = \Fp[\theta]$, and let $m = \theta(1)$. 
Fix a representation $X: N \ra \GL_m(E)$ affording $\theta$ and realized in $E$.
Denote by $\Gal(E/\Fp)$ the Galois group of the field extension $\Fp \subseteq E$; note that $|\Gal(E/\Fp)| = [E:\Fp]$ by Galois theory.
The following lemma holds.

\begin{lem}\label{lem:repAlg1}
Keep the notation above.
  For any $g\in G,$ there exists a unique $\sigma_g\in\Gal(E/\Fp)$ such that the group representation 
  $$X^{\sigma_gg}:N\ra\GL_m(E),n\mapsto X(gng^{-1})^{\sigma_g}$$ affords $\theta$.
  Moreover, $\sigma_g$ coincides with the restriction to $E$ of any $\tau \in \hH$ satisfying $\theta^\tau = \theta^{g^{-1}}$.
\end{lem}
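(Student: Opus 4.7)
The approach is to construct $\sigma_g$ as the restriction to $E$ of a suitable $\tau \in \hH$, then derive existence, well-definedness, uniqueness, and the characterization all together. The two key inputs are the $G$-stability of $\theta^\hH$ (built into the definition of a modular $\hH$-triple) and the assumption that $\hH$ is large enough to contain all automorphisms of $F$, hence of $E$.

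First, since $\theta^\hH$ is $G$-stable, $\theta^{g^{-1}}$ lies in $\theta^\hH$, so there exists $\tau \in \hH$ with $\theta^\tau = \theta^{g^{-1}}$. I would then verify that the induced action $\tau^*$ on $F$ stabilizes the subfield $E$: since $E$ is generated over $\Fp$ by $\{\theta(x)^* : x \in N_{p'}\}$ and conjugation by $g$ preserves $N_{p'}$, the identity
\[
\tau^*(\theta(x)^*) = \theta^\tau(x)^* = \theta^{g^{-1}}(x)^* = \theta(g^{-1}xg)^* \in E
\]
gives $\tau^*(E) \subseteq E$, so we may define $\sigma_g := \tau^*|_E \in \Gal(E/\Fp)$. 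To check that $X^{\sigma_g g}$ affords $\theta$, I would compute its Brauer character at a $p'$-element $h \in N$: the eigenvalues $\lambda_1,\ldots,\lambda_m$ of $X(ghg^{-1})$ are $p'$-roots of unity with Hensel lifts $\tilde\lambda_i \in \mO$ summing to $\theta(ghg^{-1})$. The compatibility of the $\hH$-action with Hensel lifting of $p'$-roots of unity yields that the Hensel lift of $\tau^*(\lambda_i)$ is exactly $\tau(\tilde\lambda_i)$, so the Brauer character of $X^{\sigma_g g}$ at $h$ equals
\[
\sum_i \tau(\tilde\lambda_i) = \tau\bigl(\theta(ghg^{-1})\bigr) = \theta^\tau(ghg^{-1}) = \theta^{g^{-1}}(ghg^{-1}) = \theta(h).
\]

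For uniqueness, if $\sigma' \in \Gal(E/\Fp)$ also makes $X^{\sigma' g}$ afford $\theta$, then lifting $\sigma'$ to some $\tau' \in \hH$ and running the above computation backward yields $\theta^{\tau'} = \theta^{g^{-1}}$. Hence $\tau^*$ and $(\tau')^*$ agree on every generator $\theta(x)^*$ of $E$, forcing $\sigma_g = \sigma'$. The final characterization in the statement then falls out of this uniqueness: any $\tau \in \hH$ with $\theta^\tau = \theta^{g^{-1}}$ produces, via restriction, a Galois automorphism of $E$ satisfying the defining property of $\sigma_g$, and is therefore forced to restrict to $\sigma_g$ itself. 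The main technical point will be justifying the Hensel-lift compatibility in the Brauer-character step: it requires carefully using that $\hH$ preserves $\mO$ and commutes with the reduction map ${}^*$, together with the bijection between $p'$-roots of unity in $\mO$ and those in $F$ provided by Hensel's lemma, so that uniqueness of Hensel lifts forces $\tau(\tilde\lambda_i) = \widetilde{\tau^*(\lambda_i)}$.
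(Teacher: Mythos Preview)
Your proposal is correct and follows essentially the same approach as the paper: choose $\tau\in\hH$ with $\theta^\tau=\theta^{g^{-1}}$, restrict to $E$, and check that the twisted representation affords $\theta$. The paper streamlines two of your steps: for existence it simply observes $X^{\sigma_g g}=X^{\tau g}$ (entries lie in $E$) and that $X^{\tau g}$ affords $(\theta^\tau)^g=\theta$ by the definition of the $\hH\times G$-action on Brauer characters, bypassing the explicit eigenvalue/Hensel-lift computation; and for uniqueness it works directly with the $F$-character $\theta^*$ in $\Gal(E/\Fp)$ rather than lifting a competitor $\sigma'$ back up to $\hH$, but the underlying idea---that two such automorphisms must agree on the generators $\theta(x)^*$ of $E$---is the same.
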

\begin{proof}
  Let $\tau\in\hH$ be such that $\theta^{\tau}=\theta^{g^{-1}}.$ 
  We can regard $\tau$ as an automorphism of $F$ and we let $\sigma_g$ be the restriction of $\tau$ to $E.$
  Since the group representation $X^{\tau g}:N\ra\GL_m(F),n\mapsto X(gng^{-1})^{\tau}$ affords $\theta$ and $X^{\sigma_g g}(n)=X^{\tau g}(n)$ for any $n\in N,$ we see that $\sigma_g\in\Gal(E/\Fp)$ satisfies the condition in the lemma.
  Now we prove the uniqueness of $\sigma_g$ in $\Gal(E/\Fp).$ 
  The map $\theta^*:N\ra F,n\mapsto \theta(n_{p'})^*,$ where $n_{p'}$ is the $p'$-part of $n,$ is the $F$-character of group representation $X^{\sigma_g g}:N\ra \GL_m(F)$. 
  Notice that the image of $\theta^*$ is contained in $E.$
Recall that two (absolutely) irreducible representations of a group are similar if and only if they afford the same $F$-character (see \cite[Chapter II, Theorem 3.13]{NagTsu}). 
Let $\upsilon\in\Gal(E/\Fp)$ be such that $X^{\sigma_g \upsilon g}$ affords $\theta.$ Then $X^{\sigma_g g}$ and $X^{\sigma_g \upsilon g}$ have a common $F$-character that must be $\theta^*.$ 
  On the other hand, the $F$-character of $X^{\sigma_g \upsilon g}$ is $(\theta^*)^{\upsilon}:N\ra F,n\mapsto \theta^*(n)^{\upsilon}$ as $\theta^*$ is the $F$-character of $X^{\sigma_g g}.$ 
  This forces $\upsilon$ to stabilize every element $\theta^*(n)$ for $n\in N,$ which means $\upsilon=1.$ 
\end{proof}

In the above lemma, the element $\sigma_g$ is independent of the choice of $X$. We say that $\sigma_g \in \Gal(E/\Fp)$ is determined by the equation 
$\theta^{\sigma_g} = \theta^{g^{-1}}.$ 
Note that $\sigma_g = 1$ when $g \in G_\theta$.
For arbitrary $g \in G$, since $X^{\sigma_g g}$ is similar to $X$, there exists $T_g \in \GL_m(E)$ such that
$$X^{\sigma_g g}(n) = T_g X(n) T_g^{-1} \quad \text{for all } n \in N,$$
where $T_g$ is unique up to scalar multiples in $E^\times$.
Let $A = \M_m(E)$, identifying $\GL_m(E)$ with $A^\times$. For $x \in A$, define
$$x^g := T_g^{-1}\cdot  x^{\sigma_g}\cdot T_g,$$
which is independent of the choice of $T_g$. Then $x \mapsto x^g$ defines an $\Fp$-algebra automorphism of $A$. We have the following lemma.

\begin{lem}\label{lem:repAlg2}
Maintaining the above notation, the map 
$$A \times G \to A, \quad (x,g) \mapsto x^g$$
defines a group action of $G$ on $A$ by $\Fp$-algebra automorphisms. 
\end{lem}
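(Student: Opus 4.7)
The plan is to verify the three things that need to be checked: that each map $x \mapsto x^g$ is an $\Fp$-algebra automorphism of $A$, that $x^1 = x$, and that $(x^g)^h = x^{gh}$ for all $g, h \in G$. The automorphism property is immediate, since the entry-wise extension of $\sigma_g$ to $A = \M_m(E)$ is an $\Fp$-algebra automorphism and conjugation by $T_g^{-1}$ is an $E$-algebra automorphism; their composition is therefore an $\Fp$-algebra automorphism. The identity axiom is equally quick: since $1 \in G_{\theta}$, the defining property $\theta^{\sigma_1} = \theta^{1^{-1}} = \theta$ together with the uniqueness statement of Lemma~\ref{lem:repAlg1} forces $\sigma_1 = 1_E$, so we may take $T_1 = I_m$ and read off $x^1 = x$.

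The main step is the compatibility $(x^g)^h = x^{gh}$. Unpacking the definition gives
\[
(x^g)^h = T_h^{-1}\,\sigma_h\bigl(T_g^{-1}\,\sigma_g(x)\,T_g\bigr)\,T_h = \bigl(\sigma_h(T_g)\,T_h\bigr)^{-1}\,(\sigma_h \sigma_g)(x)\,\bigl(\sigma_h(T_g)\,T_h\bigr),
\]
so it suffices to show (a) that $\sigma_{gh} = \sigma_h \sigma_g$ in $\Gal(E/\Fp)$, and (b) that the matrix $T' := \sigma_h(T_g)\,T_h$ satisfies the defining relation for $T_{gh}$, hence differs from $T_{gh}$ only by a scalar in $E^\times$ that cancels in the conjugation formula.

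For (a) I will use that Galois and group actions on characters commute. Writing $\theta^{(gh)^{-1}} = (\theta^{h^{-1}})^{g^{-1}}$ and substituting $\theta^{h^{-1}} = \theta^{\sigma_h}$ and $\theta^{g^{-1}} = \theta^{\sigma_g}$, one obtains $\theta^{(gh)^{-1}} = (\theta^{\sigma_g})^{\sigma_h} = \theta^{\sigma_h \sigma_g}$, so the uniqueness of $\sigma_{gh}$ in Lemma~\ref{lem:repAlg1} delivers the identity. For (b) I will compute $T'\,X(n)\,T'^{-1}$ directly: applying the defining equation $T_h X(n) T_h^{-1} = \sigma_h(X(hnh^{-1}))$, pulling $\sigma_h$ outside, and then applying the analogous equation for $T_g$, collapses the product to $\sigma_h \sigma_g (X((gh) n (gh)^{-1})) = X^{\sigma_{gh}\,gh}(n)$. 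Uniqueness of $T_{gh}$ up to a scalar (noted after Lemma~\ref{lem:repAlg1}) finishes the check.

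The only real obstacle is bookkeeping around the composition convention in $\hH$ and $\Gal(E/\Fp)$ combined with right-exponential notation, which must be handled consistently so that the product in $\sigma_h \sigma_g$ comes out in the correct order (matching the right-action equality $x^{gh} = (x^g)^h$ rather than its opposite). Once the conventions are fixed, the two bullets above are short direct computations using only the defining equations of $\sigma_g$ and $T_g$.
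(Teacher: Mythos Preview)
Your proposal is correct and follows essentially the same route as the paper: both reduce the action axiom $(x^g)^h=x^{gh}$ to the two facts $\sigma_{gh}=\sigma_g\sigma_h$ (equivalently $\sigma_h\sigma_g$, since $\Gal(E/\Fp)$ is abelian) and $T_{gh}\in E^\times\cdot T_g^{\sigma_h}T_h$, established by the same direct computation with the defining relation $X^{\sigma_g g}=T_gXT_g^{-1}$. Your extra verification that each $x\mapsto x^g$ is an $\Fp$-algebra automorphism and that $x^1=x$ is omitted in the paper as routine, but is of course correct.
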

\begin{proof}
  We need to prove that for any $g,h\in G$ and $x\in A$ we have $(x^g)^h=x^{gh}.$ 
  By Lemma \ref{lem:repAlg1}, we have $\sigma_{gh}=\sigma_g\sigma_h.$
  Let $T_g,T_h\in A^{\times}$ be such that $X^{\sigma_g g}=T_g X T_g^{-1},X^{\sigma_h h}=T_h X T_h^{-1},$ respectively.
  Then for any $n\in N,$ we have
  \begin{align*}
    X^{\sigma_{gh}(gh)}(n)&
       =X(\zs{gh}n)^{\sigma_{gh}}
       =X(\zs{g}(\zs{h}n))^{\sigma_g\sigma_h}
       =\big( T_g\cdot X(\zs{h}n)\cdot T_g^{-1}\big)^{\sigma_h}  \\
    &=T_g^{\sigma_h}\cdot X(\zs{h}n)^{\sigma_h} \cdot (T_g^{\sigma_h})^{-1}
    =T_g^{\sigma_h}\cdot T_h\cdot X(n)\cdot T_h^{-1}\cdot (T_g^{\sigma_h})^{-1}.
  \end{align*}
  Thus we can let $T_{gh}=T_g^{\sigma_h}T_h.$ Hence for any $x\in A$ we have 
  $$(x^g)^h=(T_g^{-1}\cdot x^{\sigma_g}\cdot T_g)^h
  =T_h^{-1}\cdot (T_g^{\sigma_h})^{-1}\cdot x^{\sigma_g\sigma_h}\cdot T_g^{\sigma_h}\cdot T_h=T_{gh}^{-1}\cdot x^{\sigma_{gh}}\cdot T_{gh}=x^{gh}.$$
  This completes the proof of the lemma.
\end{proof}

With this structure, $A$ becomes a $G$-algebra over $\Fp,$ and $X:N\ra A^{\times}=\GL_m(E)$ is a group representation of $N$ affording $\theta$ and realised in $E.$
We say $A$ is the \emph{$G$-algebra associated with} the $\hH$-triple $(G,N,\theta)_{\hH}$ (with respect to $X$). 
In fact, we have
\begin{equation}\label{equ:repalg1}
  X(n)^g=X(n^g)\quad \text{for all } n\in N \text{ and }
g\in G.
\end{equation}
This can be deduced by
$X(n)^{\sigma_g}= X^{\sigma_g g}(n^g)=T_gX(n^g)T_g^{-1}.$
Observe that $A$ is naturally an $E$-algebra, and the action of $g$ on $A$ is $\sigma_g$-linear, i.e.,
\begin{equation}\label{equ:repalg2}
  (z\cdot x)^g=z^{\sigma_g}\cdot x^g \quad \text{for any } z\in E \text{ and } x\in A.
\end{equation} 
Since $A$ is $E$-spanned by the set $\{X(n)\which n\in N\},$ the action of $g$ on $A$ is completely determined by (\ref{equ:repalg1}) and (\ref{equ:repalg2}).

We now recall basic notions of second cohomology.
Let $E$ be a field with a finite group $G$ acting as automorphisms, where $x^g$ denotes the action of $g \in G$ on $x \in E$.
A factor set of $G$ (with coefficients in $E^{\times}$) is a map 
$\alpha:G\times G\ra E^{\times}$ satisfying the cocycle condition:
$$\alpha(gh,k)\alpha(g,h)^k = \alpha(g,hk)\alpha(h,k) \quad \text{for all } g,h,k \in G.$$
The factor sets arising from projective representations occur when $G$ acts trivially on $E$.
If $\alpha(1,g) = \alpha(g,1) = 1$ for all $g \in G$, we say $\alpha$ is \emph{normalized}. Denote by $\Z^2(G,E^\times)$ the group of all factor sets of $G$.
Two factor sets $\alpha, \beta \in \Z^2(G,E^\times)$ are \emph{cohomologous} if there exists $\gamma \colon G \to E^\times$ with
$$\beta(g,h) = \alpha(g,h)\gamma(g)^h \gamma(h)\gamma(gh)^{-1} \quad \text{for all } g,h \in G.$$
The \emph{second cohomology group} $\bH^2(G,E^\times)$ consists of cohomology classes $[\alpha]$ of all factor sets $\alpha \in \Z^2(G,E^\times)$ under this relation.

We will associate to each $\hH$-triple a cohomology class of the quotient group.
 First, we establish the following lemma.
 Note that a unitary embedding of rings is an injective ring homomorphism that preserves identity elements.

\begin{lem}\label{lem:coho1}
  Let $E$ be any field and $K\subseteq E$ be a subfield such that $s=|E:K|$ is finite. 
  Let $A=\M_m(E)$ for some positive integer $m.$ 
  Then there exists a unitary embedding $\iota:A\ra \M_{ms}(K)$ of $K$-algebras. And if $\iota':A\ra\M_{ms}(K)$ is another such embedding,  there exists $Y\in\M_{ms}(K)^{\times}$ satisfying $\iota'(x)=Y^{-1}\iota(x)Y$ for all $x\in A.$
\end{lem}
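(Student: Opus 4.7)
My plan is to handle existence by writing down an explicit embedding and to handle uniqueness by invoking the Skolem--Noether theorem.

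For existence, I would use the left regular representation of $E$ on itself. Fix a $K$-basis $e_1,\ldots,e_s$ of $E$; then for each $z\in E$, left multiplication by $z$ is a $K$-linear endomorphism of $E$, represented in this basis by a matrix $\lambda(z)\in\M_s(K)$. Since left multiplication respects $1$ and products, $\lambda\colon E\ra\M_s(K)$ is a unitary $K$-algebra embedding. Applying $\lambda$ entrywise to matrices in $A=\M_m(E)$ and identifying an $m\times m$ block matrix with $s\times s$ blocks over $K$ as an $ms\times ms$ matrix over $K$ produces the desired unitary $K$-algebra embedding $\iota\colon A\hookrightarrow\M_{ms}(K)$.

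For uniqueness, I would observe that $\M_{ms}(K)$ is a central simple $K$-algebra and that $A=\M_m(E)$ is a simple $K$-algebra (both are matrix algebras over fields, although $A$ is central only over $E$, not over $K$). The Skolem--Noether theorem, in the form asserting that any two unitary $K$-algebra homomorphisms from a simple $K$-algebra into a central simple $K$-algebra are conjugate by a unit of the target, as recorded in \cite[Chapter II, Theorem 4.6]{NagTsu}, immediately yields the required element $Y\in\M_{ms}(K)^\times$ with $\iota'(x)=Y^{-1}\iota(x)Y$ for all $x\in A$. Equivalently, one may argue by Morita theory: each embedding equips $K^{ms}$ with the structure of a left $A$-module of $K$-dimension $ms$, which by simplicity of $A$ must be isomorphic to a direct sum of copies of the unique simple $A$-module $E^m$ (itself of $K$-dimension $ms$), forcing the two $A$-module structures to be isomorphic and producing $Y$ as a $K$-linear intertwiner.

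Neither part is deep; the only subtle point is to invoke the version of Skolem--Noether that permits the subalgebra to be simple but not central, so I do not anticipate any genuine obstacle.
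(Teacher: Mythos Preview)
Your proposal is correct. The existence argument is the same as the paper's: the paper phrases it abstractly as ``the right $A$-action on the simple module $V=E^m$, viewed as a $K$-vector space of dimension $ms$,'' which in coordinates is exactly your entrywise application of the regular representation $\lambda\colon E\to\M_s(K)$. For uniqueness, the paper does not invoke Skolem--Noether as a black box but instead writes out your alternative module-theoretic argument directly: the two embeddings equip $K^{ms}$ with two right $A$-module structures, which are isomorphic since $A$ is simple and both have the same $K$-dimension, and the $K$-linear intertwiner furnishes $Y$. Your primary route via Skolem--Noether is a legitimate shortcut (and the module comparison is essentially how that theorem is proved in this setting), so the two approaches differ only in packaging.
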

\begin{proof}
  Let $V$ be a simple right $A$-module. A canonical example is $V = E^m$, the space of row vectors over $E$ of dimension $m$, where $A = \M_m(E)$ acts by right multiplication.   
  Let $\fE=\End_K(V)$ be the $K$-endomorphism algebra, where endomorphisms are composed from the left. Note that $\fE$ is a $K$-algebra.
  Fixing a $K$-basis for $V$, we identify $V \cong K^{ms}$ as $K$-vector spaces (since $\dim_K V = ms$), and $\fE \cong \M_{ms}(K)$ acting on $V$ by right multiplication.
The right $A$-action on $V$ induces an injective $K$-algebra homomorphism
$$\iota \colon A \hookrightarrow \fE, \quad x \mapsto (v \mapsto vx),$$
which preserves identities by construction.

 Let $\iota,\iota':A\ra \M_{ms}(K)$ be two unitary embeddings.
 Let $V = K^{ms}$ be the row vector space over $K.$
 For any $v\in V$ and $x\in A,$ define $v\cdot x=v\iota(x)$ and $v\ast x=v\iota'(x).$ 
  Thus $V$ has two $A$-module structures, and we denote this two $A$-modules by $V(\cdot)$ and $V(\ast),$ respectively.  
  Since $A$ is simple, $V(\cdot) \cong V(\ast)$ as $A$-modules. This isomorphism is realized by a $K$-linear bijection $f \colon V \to V$ satisfying:
  \begin{equation}\label{equ:new1}
    f(v \cdot x) = f(v) \ast x \quad \forall v \in V, x \in A.
  \end{equation}
The $K$-linearity implies $f(v) = vY$ for some invertible $Y \in \GL_{ms}(K)$. 
Substituting into \ref{equ:new1} yields:
$$v\iota(x)Y = vY\iota'(x) \quad \forall v \in V, x\in A.$$
 Since this holds for all $v \in V$, we conclude that $\iota(x)Y = Y\iota'(x),$ hence $\iota'(x) = Y^{-1}\iota(x)Y.$
\end{proof}

Observe that in the above lemma we have
$$\C_{\M_{ms}(K)}(\iota(A)) = \iota(E\cdot 1_A).$$
This equality holds because
$\C_{\M_{ms}(K)}(\iota(A)) \supseteq \iota(E\cdot 1_A)$ by construction
     and $$\dim_K \C_{\M_{ms}(K)}(\iota(A)) = s$$ by \cite[Chapter II, Lemma 4.4]{NagTsu}.
In fact, $\iota(E\cdot 1_A)$ and $\iota(A)$ are both centralizers of each other in $\M_{ms}(K)$. 
Consequently, the invertible matrix $Y$ in the lemma is unique up to left multiplication by elements of $\iota(E^\times \cdot 1_A)$ (or right multiplication by elements of $\iota'(E^\times \cdot 1_A)$).

The next result extends the theory of projective extensions for character triples to the broader framework of $\hH$-triples.

\begin{thm}\label{thm:coho}
  Let $(G,N,\theta)_{\hH}$ be an $\hH$-triple with $E=\Fp[\theta].$
  Let $X:N\ra \GL_m(E)=A^{\times}$ be a group representation affording $\theta$ and realised in $E,$ where $m=\theta(1).$
  Let $A$ be the $G$-algebra associated with $(G,N,\theta)_{\hH}$ and $X,$ and let $\iota:A\ra\M_{ms}(\Fp)$ be a unitary embedding, where $s=|E:\Fp|$.
Then there exists a function $Y:G\ra\M_{ms}(\Fp)^{\times}$ satisfying:
  \begin{enumerate}
    \item  $Y(g)^{-1}\iota(x)Y(g)=\iota(x^g)$ for all $x \in A$,  $g \in G$.
    \item  $Y(n)=\iota(X(n))$ for all $n\in N.$
    \item $Y(gn)=Y(g)Y(n)$ and $Y(ng)=Y(n)Y(g)$ for all $n \in N$,  $g \in G$.
  \end{enumerate}
\end{thm}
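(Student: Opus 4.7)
The plan is to build $Y$ in two stages. First, for each $g\in G$ separately, obtain an invertible candidate matrix implementing the $g$-action on $\iota(A)$ by conjugation; this is a direct application of Lemma~\ref{lem:coho1}. Second, piece these candidates together via an $N$-transversal so that conditions (2) and (3) hold simultaneously with (1); the key tool here is the compatibility formula $X(n)^g=X(n^g)$ from \eqref{equ:repalg1}.

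For the first stage, fix $g\in G$. The composition $\iota\circ g\colon A\ra\M_{ms}(\Fp)$, $x\mapsto\iota(x^g)$, is a unitary $\Fp$-algebra embedding, because $x\mapsto x^g$ is an $\Fp$-algebra automorphism of $A$ by Lemma~\ref{lem:repAlg2}. Lemma~\ref{lem:coho1} then produces an invertible matrix $Y_g\in\M_{ms}(\Fp)^{\times}$ with $\iota(x^g)=Y_g^{-1}\iota(x)Y_g$ for all $x\in A$, unique up to multiplication by $\iota(E^{\times}\cdot 1_A)$. In particular, for $n\in N$ we have $\theta^n=\theta$ and hence $\sigma_n=1$, so one may take $T_n=X(n)$ in the definition of the $G$-action on $A$; consequently $\iota(X(n))$ is a legitimate choice for $Y_n$ in the first stage.

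For the second stage, fix a (left) transversal $T$ of $N$ in $G$ with $1\in T$, set $Y(1):=\I_{ms}$ and, for each $t\in T\setminus\{1\}$, choose $Y(t):=Y_t$ as above. Define $Y(n):=\iota(X(n))$ for all $n\in N$. Each $g\in G$ has a unique expression $g=tn$ with $t\in T$, $n\in N$, and we set $Y(g):=Y(t)Y(n)$. Condition (2) is immediate. For (1), use $x^g=(x^t)^n=X(n)^{-1}x^tX(n)$ (the first equality by Lemma~\ref{lem:repAlg2}, the second because $\sigma_n=1$) together with condition (1) for $Y(t)$. The first half of (3), namely $Y(gn')=Y(g)Y(n')$ for $n'\in N$, follows immediately from $gn'=t(nn')$ and the multiplicativity of $\iota\circ X|_N$.

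The only delicate verification — and thus the main obstacle — is the second half of (3): $Y(n'g)=Y(n')Y(g)$. Writing $g=tn$ and $n'^{\,t}:=t^{-1}n't\in N$, we have $n'g=t(n'^{\,t}n)$, whence $Y(n'g)=Y(t)\,\iota(X(n'^{\,t}))\,\iota(X(n))$. By \eqref{equ:repalg1}, $X(n'^{\,t})=X(n')^t$, and condition (1) applied to $x=X(n')$ yields $\iota(X(n')^t)=Y(t)^{-1}\iota(X(n'))Y(t)$. Substituting gives
\[
Y(n'g)=Y(t)\cdot Y(t)^{-1}\iota(X(n'))Y(t)\cdot\iota(X(n))=\iota(X(n'))\,Y(t)\,\iota(X(n))=Y(n')Y(g),
\]
as required. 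Thus it is precisely the compatibility of $X$ with the $G$-algebra structure encoded in \eqref{equ:repalg1} that reconciles left and right $N$-multiplicativity of $Y$ with condition (1); no further adjustment of the matrices $Y(t)$ is needed.
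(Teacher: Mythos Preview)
Your proof is correct and follows essentially the same approach as the paper: choose a transversal of $N$ in $G$, use Lemma~\ref{lem:coho1} to produce conjugating matrices at the transversal elements, and extend multiplicatively along $N$ via $\iota\circ X$. The paper writes $g=ng_i$ and sets $Y(g)=\iota(X(n))Y(g_i)$, whereas you write $g=tn$ and set $Y(g)=Y(t)\iota(X(n))$; since $N$ is normal these are equivalent conventions, and you actually supply the verification of (1) and (3) that the paper defers to a reference as ``standard''.
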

\begin{proof}
By Lemma \ref{lem:coho1}, there exists a unitary embedding $\iota \colon A \hookrightarrow \M_{ms}(\Fp)$. 
  Let $\{g_1=1,\ldots,g_t\}$ be a complete set of coset representatives for $N$ in $G$.
For each $1 \leqslant i \leqslant t$, the function
$x \mapsto \iota(x^{g_i})$
defines another unitary $\Fp$-algebra embedding. Lemma \ref{lem:coho1} guarantees the existence of matrices $Y(g_i) \in \GL_{ms}(\Fp)$ satisfying:
$$Y(g_i)^{-1}\iota(x)Y(g_i) = \iota(x^{g_i}) \quad \text{for all } x \in A,$$
where we set $Y(1) = \I_{ms}$.
For arbitrary $g \in G$, writing $g = ng_i$ uniquely with $n \in N$ and $1 \leqslant i \leqslant t$, we define
$$Y(g) := \iota(X(n))Y(g_i).$$
Standard verification (cf. the proof for \cite[Chapter III, Theorem 5.7]{NagTsu}) shows that $Y$ satisfies all required properties.
\end{proof}

In the above theorem, for any $g,h \in G$, observe that 
$$Y(gh)^{-1}Y(g)Y(h) \in \iota(E^\times \cdot 1_A),$$
since this element centralizes $\iota(A)$. We may therefore define $\alpha(g,h) \in E^\times$ via the relation
$$Y(g)Y(h) = Y(gh)\iota(\alpha(g,h)).$$

\begin{prop}
  Keep the notation above. Let $G$ act on the field $E$ through automorphisms $\{\sigma_g\which g\in G\}$, where each $\sigma_g\in\Gal(E/\Fp)$ is determined by  $\theta^{\sigma_g}=\theta^{g^{-1}}.$ 
   Explicitly, this action is given by:
$$x^g := x^{\sigma_g} \quad \text{for all } x \in E, \, g \in G.$$
  Then the function $\alpha \colon G \times G \to E^\times$ defined previously constitutes a factor set in $\Z^2(G,E^\times)$.
\end{prop}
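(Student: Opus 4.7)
The plan is to derive the cocycle identity directly from the associativity of the product $Y(g)Y(h)Y(k)$ in $\M_{ms}(\Fp)^\times$, using the defining relation $Y(g)Y(h) = Y(gh)\iota(\alpha(g,h))$ and the conjugation property (1) of Theorem~\ref{thm:coho}. First I would verify that $\alpha$ actually takes values in $E^\times$: this is exactly the remark preceding the proposition, since $Y(gh)^{-1}Y(g)Y(h)$ centralises $\iota(A)$ in $\M_{ms}(\Fp)$ and therefore lies in $\iota(E^\times\cdot 1_A)$, allowing us to write it uniquely as $\iota(\alpha(g,h)\cdot 1_A)$ with $\alpha(g,h)\in E^\times$.

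The key computational step is to interchange $\iota(\alpha(g,h))$ with $Y(k)$. Property (1) of Theorem~\ref{thm:coho} says $Y(k)^{-1}\iota(x)Y(k) = \iota(x^k)$ for all $x\in A$; applied to $x = \alpha(g,h)\cdot 1_A \in E\cdot 1_A$, and using equation (\ref{equ:repalg2}) (together with $1_A^k = 1_A$), this gives
\[
\iota(\alpha(g,h))\, Y(k) \;=\; Y(k)\,\iota(\alpha(g,h)^{\sigma_k}\cdot 1_A) \;=\; Y(k)\,\iota(\alpha(g,h)^{k}),
\]
where the exponent $k$ on the right denotes the action of $G$ on $E$ through $\sigma_k$, as defined in the statement. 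Now I would expand the triple product two ways. One way:
\[
\bigl(Y(g)Y(h)\bigr)Y(k) \;=\; Y(gh)\iota(\alpha(g,h))Y(k) \;=\; Y(gh)Y(k)\iota(\alpha(g,h)^{k}) \;=\; Y(ghk)\,\iota\bigl(\alpha(gh,k)\,\alpha(g,h)^{k}\bigr).
\]
The other way:
\[
Y(g)\bigl(Y(h)Y(k)\bigr) \;=\; Y(g)Y(hk)\iota(\alpha(h,k)) \;=\; Y(ghk)\,\iota\bigl(\alpha(g,hk)\,\alpha(h,k)\bigr),
\]
where I use that $\iota(E\cdot 1_A)$ is commutative so the two $\iota$-factors can be multiplied inside a single $\iota$.

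Equating the two expressions and cancelling $Y(ghk)$ on the left, I get $\iota(\alpha(gh,k)\,\alpha(g,h)^{k}) = \iota(\alpha(g,hk)\,\alpha(h,k))$, and since $\iota$ is injective this yields the cocycle identity
\[
\alpha(gh,k)\,\alpha(g,h)^{k} \;=\; \alpha(g,hk)\,\alpha(h,k)
\]
in $E^\times$, which is precisely the condition for $\alpha \in \Z^2(G,E^\times)$ with respect to the specified $G$-action on $E$. The only genuinely subtle point—and the one I would write out most carefully—is the identification of the $G$-algebra action on the scalar copy $E\cdot 1_A \subseteq A$ with the Galois action $\sigma_k$ on $E$; this is forced by (\ref{equ:repalg2}) together with the fact that $1_A$ is fixed by every $g\in G$. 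Everything else is formal manipulation using associativity and the definition of $\alpha$.
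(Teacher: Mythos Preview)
Your proposal is correct and follows essentially the same approach as the paper: both expand $Y(g)Y(h)Y(k)$ two ways using associativity and the defining relation, use the conjugation property of Theorem~\ref{thm:coho} to push $\iota(\alpha(g,h))$ past $Y(k)$, and equate to obtain the cocycle identity. You simply spell out more explicitly why the interchange step introduces the twist $\alpha(g,h)\mapsto\alpha(g,h)^{\sigma_k}$, which the paper treats as understood.
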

\begin{proof}
  For any $g,h,k\in G.$ On the one hand, we have
  $$Y(g)(Y(h)Y(k))=Y(g)Y(hk)\iota(\alpha(h,k))
  =Y(ghk)\iota(\alpha(g,hk)\alpha(h,k)).$$ 
  On the other hand, we have
  $$(Y(g)Y(h))Y(k)=Y(gh)\iota(\alpha(g,h))Y(k)
  =Y(gh)Y(k)\iota(\alpha(g,h)^k)
  =Y(ghk)\iota(\alpha(gh,k)\alpha(g,h)^k).$$
  This implies that $\alpha(g,hk)\alpha(h,k)=\alpha(gh,k)\alpha(g,h)^k,$ thus $\alpha\in \Z^2(G,E^{\times}).$
\end{proof}

Let $\bar{G} = G/N$ and write $\bar{g} \in \bar{G}$ for the image of $g \in G$. 
Since $N$ acts trivially on $E$, the $G$-action on $E$ descends to a
 $\bar{G}$-action on $E$ by $x^{\bar{g}}=x^g$ for $x\in E$ and $g\in G.$ 
For any $g,h\in G,$ observe that $\alpha(g,h)$ depends only on the $N$-cosets of $g$ and $h$. 
Therefore, the induced function
$$\bar{\alpha} \colon \bar{G} \times \bar{G} \to E^\times, \quad (\bar{g}, \bar{h}) \mapsto \alpha(g,h)$$
is a well-defined factor set in $\Z^2(\bar{G}, E^\times)$.

\begin{prop}
  Keep the notation above. The cohomology class $[\bar{\alpha}]\in\tH^2(\bar{G},E^{\times})$ is uniquely determined by the $\hH$-triple $(G,N,\theta)_{\hH}.$
\end{prop}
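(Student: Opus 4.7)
The plan is to show that the class $[\bar\alpha]\in\bH^2(\bar G,E^\times)$ is unchanged under each of the three sources of ambiguity in its construction: the representation $X$, the embedding $\iota$, and the function $Y$. I will handle them from the most local to the most global, exhibiting in each case an explicit coboundary or an outright equality of factor sets.

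First, fix $X$ and $\iota$, and let $Y'$ be another function satisfying conditions (1)--(3) of Theorem \ref{thm:coho}. Condition (1) applied to both $Y$ and $Y'$ shows that $Y'(g)Y(g)^{-1}$ centralizes $\iota(A)$ in $\M_{ms}(\Fp)$; by the remark following Lemma \ref{lem:coho1} this centralizer equals $\iota(E^\times\cdot 1_A)$, so there is a unique $\gamma(g)\in E^\times$ with $Y'(g)=Y(g)\iota(\gamma(g)\cdot 1_A)$. Since $\sigma_n=1$ for $n\in N$ (so $N$ acts trivially on $E$), conditions (2) and (3) of Theorem \ref{thm:coho} force $\gamma(n)=1$ for $n\in N$ and $\gamma(ng)=\gamma(g)=\gamma(gn)$ for $g\in G$, $n\in N$, so $\gamma$ factors through a map $\bar\gamma\colon\bar G\to E^\times$. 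Expanding $Y'(g)Y'(h)$ and using condition (1) for $Y$ applied to $\gamma(g)\cdot 1_A$ yields
\[
\alpha'(g,h) = \alpha(g,h)\,\gamma(g)^h\,\gamma(h)\,\gamma(gh)^{-1},
\]
so $[\bar\alpha']=[\bar\alpha]$.

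Next, keep $X$ fixed and replace $\iota$ by another embedding $\iota'$. Lemma \ref{lem:coho1} supplies $Z\in\GL_{ms}(\Fp)$ with $\iota'(x)=Z^{-1}\iota(x)Z$, and then $Y'(g):=Z^{-1}Y(g)Z$ is readily checked to satisfy conditions (1)--(3) of Theorem \ref{thm:coho} for $(\iota',Y')$, with resulting factor set identically equal to $\alpha$; combined with the first step, the cohomology class is independent of the choice of $\iota$. Finally, let $X'=TXT^{-1}$ be another representation affording $\theta$ and realized in $E$, with $T\in\GL_m(E)$ provided by Skolem--Noether. A direct computation from $X^{\sigma_g g}(n)=T_g X(n)T_g^{-1}$ shows that the corresponding intertwiners for $X'$ are $T'_g = T^{\sigma_g}T_g T^{-1}$, and consequently the $E$-algebra automorphism $\phi\colon A\to A$, $x\mapsto TxT^{-1}$, is $G$-equivariant between the two $G$-algebra structures on $A$ arising from $X$ and $X'$. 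Taking $\iota':=\iota\circ\phi^{-1}$ and $Y':=Y$ gives valid data for $X'$, and since $\phi$ fixes the center $E\cdot 1_A$ pointwise, the factor set is again $\alpha$; combined with the first two steps, the class $[\bar\alpha]$ is therefore independent of $X$.

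The main technical obstacle is the third step, where one must carefully track how the $G$-algebra structure on $A$ itself depends on $X$, and verify, using the $\sigma$-linearity of the $G$-action established in the construction, that conjugation by $T$ intertwines the two $G$-actions. Once this is in place, the remainder reduces to routine manipulations with cocycles and the observation that inner automorphisms of $\M_m(E)$ by elements of $\GL_m(E)$ fix the center $E\cdot 1_A$ pointwise, so that the scalar factors $\alpha(g,h)\cdot 1_A$ pass unchanged through $\phi$.
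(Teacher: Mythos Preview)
Your proof is correct and follows essentially the same approach as the paper's: both argue independence from each of the three choices $X$, $\iota$, $Y$ using the centralizer computation $\C_{\M_{ms}(\Fp)}(\iota(A))=\iota(E\cdot 1_A)$ and the $G$-equivariance of conjugation by $T$. The only difference is organizational: you treat the three ambiguities separately in the order $Y$, $\iota$, $X$, whereas the paper first disposes of $X$ via the $G$-algebra isomorphism and then handles $\iota$ and $Y$ simultaneously by conjugating $Y'$ into $\tilde Y$ and reading off the coboundary directly.
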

\begin{proof}
Given an $\hH$-triple $(G,N,\theta)_{\hH}$, the cohomology class $[\bar{\alpha}]$ is constructed through the following steps.
\begin{enumerate}
    \item First, choose a representation $X$ affording $\theta$ and realized in $E:=\Fp[\theta],$ and let $A$ be the $G$-algebra associated with $(G,N,\theta)_{\hH}$ and $X$.
    \item Next, select a unitary $\Fp$-algebra embedding $\iota \colon A \hookrightarrow \M_{ms}(\Fp)$.
    \item Finally, fix a function $Y \colon G \to \GL_{ms}(\Fp)$ satisfying the conditions of Theorem~\ref{thm:coho}.
\end{enumerate} 
\item Then $\bar{\alpha} \colon \bar{G} \times \bar{G} \to E^\times$ is defined via the relation
    $$Y(g)Y(h) = Y(gh)\iota(\bar{\alpha}(\bar{g},\bar{h})) \quad \text{for } g,h \in G.
    $$

Let $X'$ be another representation affording $\theta$ and realized in $E,$ and let $A'$ be the $G$-algebra associated with $(G,N,\theta)_{\hH}$ and $X'$.
Let $\iota':A'\ra \M_{ms}(\Fp)$ be a unitary embedding, and $Y':G\ra \M_{ms}(\Fp)^{\times}$ be a function satisfying Theorem \ref{thm:coho}  for $A',X',\iota',Y'.$
Define $\bar{\alpha}' \colon \bar{G} \times \bar{G} \to E^\times$ via 
$$
Y'(g)Y'(h) = Y'(gh)\iota'(\bar{\alpha}'(\bar{g}, \bar{h})) \quad \forall g,h \in G.
$$ 
We need to prove that $[\bar\alpha]=[\bar\alpha'].$

Since both $X$ and $X'$ afford $\theta$, there exists $T \in \GL_m(E)$ such that
$$ X'(n) = TX(n)T^{-1} \quad \forall n \in N. $$
Let $\ast$ denote the $G$-action on $A'$ (distinguished from the $G$-action on $A$). 
Thus $X'(n)^{\ast g}=X'(n^g)$ for all $n\in N$ and $g\in G.$
The $E$-algebra isomorphism 
$$ f \colon A \to A', \quad x \mapsto TxT^{-1} $$
preserves $G$-actions.
First, for generators $X(n)$ ($n \in N$): $$f(X(n))^{\ast g}=X'(n)^{\ast g}=X'(n^g)=f(X(n^g))=f(X(n)^g).$$
For general $x = \sum_{n \in N} k_n X(n)$ ($k_n \in E$): 
$$f(x)^{\ast g}=\sum_{n\in N}k_n^gf(X(n))^{\ast g}
=\sum_{n\in N}k_n^gf(X(n)^g)
=f(\sum_{n\in N}k_n^gX(n)^g)
=f(x^g).$$
Thus $f:A\ra A'$ is an isomorphism of $G$-algebras, allowing us to assume that $A=A'$ and $X=X'.$

  Given two unitary embeddings $\iota, \iota' \colon A \to \M_{ms}(\Fp)$, Lemma~\ref{lem:coho1} provides $W \in \GL_{ms}(\Fp)$ such that
$$\iota(x) = W\iota'(x)W^{-1} \quad \forall x \in A.$$
Define $\tilde{Y}(g) = WY'(g)W^{-1}$ for $g \in G$. 
For any $g,h\in G,$ conjugating the relation
$$Y'(g)Y'(h) = Y'(gh)\iota'(\bar{\alpha}'(\bar{g},\bar{h}))$$
by $W$ yields:
\begin{equation}\label{equ:coho-Y}
\tilde{Y}(g)\tilde{Y}(h) = \tilde{Y}(gh)\iota(\bar{\alpha}'(\bar{g},\bar{h})).
\end{equation}
For any $x\in A$ and $g\in G,$ conjugating $Y'(g)^{-1}\iota'(x)Y'(g) = \iota'(x^g)$ by $W$ gives
$$\tilde{Y}(g)^{-1}\iota(x)\tilde{Y}(g) = \iota(x^g) \quad \forall x \in A, g \in G.$$
Thus for any $g\in G,$ there exists $\gamma(g) \in E^\times$ with $\tilde{Y}(g) = Y(g)\iota(\gamma(g))$. 
Since $\gamma(g)$ depends only on the coset $gN$, we define $\bar{\gamma} \colon \bar{G} \to E^\times$ by $\bar{\gamma}(\bar{g}) = \gamma(g)$.
For any $g,h\in G,$ we compute that 
\begin{equation}\label{equ:coho-Y2}
  \tilde Y(g)\tilde Y(h)
  =Y(g)\iota(\bar \gamma (\bar g))
  Y(h)\iota(\bar \gamma (\bar h))
  =\tilde Y(gh)\iota(\bar\alpha(\bar g,\bar h)\bar\gamma (\bar g)^{\bar h}
  \bar\gamma(\bar h)\bar\gamma (\bar{g}\bar{h})^{-1}).
\end{equation}
Comparing \ref{equ:coho-Y} with \ref{equ:coho-Y2}, 
we obtian $$\bar{\alpha}'(\bar g,\bar h)=\bar\alpha(\bar g,\bar h)\bar\gamma (\bar g)^{\bar h}
  \bar\gamma(\bar h)\bar\gamma (\bar{g}\bar{h})^{-1}\quad \forall g,h
  \in G.$$ 
Hence $[\bar{\alpha}'] = [\bar{\alpha}]$ in $\bH^2(\bar{G},E^\times)$.
\end{proof}

In the above proposition, the cohomology class $[\bar\alpha] \in \bH^2(\bar{G},E^{\times})$ is called the \emph{cohomology class associated with} the $\hH$-triple $(G,N,\theta)_{\hH}$, or equivalently, we say that $(G,N,\theta)_{\hH}$ \emph{realizes} the cohomology class $[\bar\alpha]$.

In the remainder of this section, we will prove that when two $\hH$-triples share the same cohomology class, they satisfy the partial order relation from Definition~\ref{def:H-tri:ord}. We begin with a key lemma.

\begin{lem}\label{lem:coho->H-tri}
Let $(G,N,\theta)_{\hH}$ be an $\hH$-triple with $E = \Fp[\theta]$, and let $[\bar\alpha] \in \bH^2(G/N, E^\times)$ be its associated cohomology class. 
  Let $\alpha:G\times G\ra E^{\times}$ be a factor set that is a lift of some $\bar\alpha\in[\bar\alpha]$.
  Then there exists a projective representation $\Pj$ of $G_{\theta}$ associated with $\theta$ with factor set $\alpha_{G_{\theta}\times G_{\theta}},$ and for any $a=(\sigma, g)\in(\hH\times G)_{\theta},$ the function $\mu_a:G_{\theta}\ra F^{\times}$ determined by $\Pj^a\sim \mu_a\Pj$ satisfies
  $$\mu_a(h)=\big(\alpha(g,hg^{-1})^{-1}
  \alpha(h,g^{-1})^{-1} \alpha(g,g^{-1}) \big)^{\sigma}
  \quad \text{for all } h\in G_{\theta}.$$ 
  
\end{lem}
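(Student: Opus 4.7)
The strategy is to construct $\Pj$ explicitly from the data used to define $[\bar\alpha]$ and then compute $\mu_a$ by analysing conjugation by $Y(g)$ inside the $G$-algebra $A$. Fix a representation $X\colon N\to\GL_m(E)$ affording $\theta$, the associated $G$-algebra $A=\M_m(E)$, a unitary embedding $\iota\colon A\hookrightarrow\M_{ms}(\Fp)$, and a function $Y\colon G\to\M_{ms}(\Fp)^\times$ as in Theorem~\ref{thm:coho}. Let $\alpha_Y\in\Z^2(G,E^\times)$ be the associated normalized factor set, so $\bar\alpha_Y\in[\bar\alpha]$, and pick a normalized cochain $\bar\gamma\colon\bar G\to E^\times$ with $\alpha_Y(g,h)=\alpha(g,h)\bar\gamma(\bar g)^{-\bar h}\bar\gamma(\bar h)^{-1}\bar\gamma(\overline{gh})$.

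For $h\in G_\theta$ we have $\sigma_h=1$, so $Y(h)$ implements an inner automorphism of $\iota(A)$; hence after a rescaling we may choose $T_h\in\GL_m(E)=A^\times$ with $Y(h)=\iota(T_h)$, satisfying $T_n=X(n)$ for $n\in N$, $T_h^{-1}X(n)T_h=X(n^h)$, and $T_hT_k=\alpha_Y(h,k)T_{hk}$ for all $h,k\in G_\theta$. Define
\[\Pj(h):=\bar\gamma(\bar h)\,T_h\qquad(h\in G_\theta).\]
Because $\sigma_k=1$ for $k\in G_\theta$, a direct calculation shows $\Pj_N=X$ and that the factor set of $\Pj$ equals $\alpha|_{G_\theta\times G_\theta}$.

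Now fix $a=(\sigma,g)\in(\hH\times G)_\theta$, so $\sigma|_E=\sigma_g$. Applying the factor set twice in $Y(g)\cdot(Y(h)Y(g^{-1}))$ gives $Y(g)Y(h)Y(g^{-1})=Y(ghg^{-1})\iota(\alpha_Y(g,hg^{-1})\alpha_Y(h,g^{-1}))$. On the other hand, $Y(g)\iota(T_h)Y(g)^{-1}=\iota(T_h^{g^{-1}})$ together with $Y(g^{-1})=Y(g)^{-1}\iota(\alpha_Y(g,g^{-1}))$ yields $Y(g)Y(h)Y(g^{-1})=\iota\bigl(T_h^{g^{-1}}\alpha_Y(g,g^{-1})\bigr)$. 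Equating the two expressions and using $Y(ghg^{-1})=\iota(T_{ghg^{-1}})$ (since $ghg^{-1}\in G_\theta$), we obtain inside $A^\times$
\[T_{ghg^{-1}}=T_{g^{-1}}^{-1}T_h^{\sigma_{g^{-1}}}T_{g^{-1}}\cdot\alpha_Y(g,g^{-1})\alpha_Y(g,hg^{-1})^{-1}\alpha_Y(h,g^{-1})^{-1}.\]
Apply $\sigma$ entrywise: since $\sigma_g\sigma_{g^{-1}}=1$, we have $T_h^{\sigma_{g^{-1}}\sigma}=T_h$. Setting $R:=(T_{g^{-1}}^\sigma)^{-1}$ (independent of $h$) and comparing $\Pj^a(h)=\bar\gamma(\overline{ghg^{-1}})^\sigma T_{ghg^{-1}}^\sigma$ with $\mu_a(h)R\Pj(h)R^{-1}=\mu_a(h)\bar\gamma(\bar h)RT_hR^{-1}$ gives
\[\mu_a(h)=\bar\gamma(\overline{ghg^{-1}})^\sigma\bar\gamma(\bar h)^{-1}\cdot\alpha_Y(g,g^{-1})^\sigma\alpha_Y(g,hg^{-1})^{-\sigma}\alpha_Y(h,g^{-1})^{-\sigma}.\]

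To finish, substitute $\alpha_Y(u,v)=\alpha(u,v)\bar\gamma(\bar u)^{-\bar v}\bar\gamma(\bar v)^{-1}\bar\gamma(\overline{uv})$ for $(u,v)\in\{(g,g^{-1}),(g,hg^{-1}),(h,g^{-1})\}$ and use $\sigma_h=1$, $\sigma_{hg^{-1}}=\sigma_{g^{-1}}$ (both immediate since $h\in G_\theta$), and $\sigma\sigma_{g^{-1}}=1$. Every $\bar\gamma$ factor cancels pairwise, leaving
\[\mu_a(h)=\bigl(\alpha(g,g^{-1})\alpha(g,hg^{-1})^{-1}\alpha(h,g^{-1})^{-1}\bigr)^\sigma,\]
which is the claimed formula. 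The main obstacle is the careful simultaneous bookkeeping of two Galois actions — the entrywise $\sigma=\sigma_g$ on matrices and the scalar $\sigma_{g^{-1}}$ coming from the $G$-action on $A$ — together with verifying that the $\bar\gamma$-adjustments bridging $\alpha_Y$ and $\alpha$ cancel out; both reductions are ultimately driven by the single identity $\sigma_g\sigma_{g^{-1}}=1$.
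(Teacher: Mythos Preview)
Your argument is correct and follows the same underlying strategy as the paper: build $\Pj$ from the map $Y$ of Theorem~\ref{thm:coho}, compute $Y(g)Y(h)Y(g)^{-1}$ in two ways, and translate the resulting scalar back through the $G$-algebra action using $\sigma|_E=\sigma_g$. The one difference is that you carry an unnecessary coboundary correction $\bar\gamma$ throughout. The paper instead observes that $Y$ can be chosen from the outset so that its factor set is \emph{exactly} $\alpha$: given any $Y_0$ with factor set $\alpha_{Y_0}$, replacing $Y_0(g)$ by $Y_0(g)\iota(\bar\gamma(\bar g))$ still satisfies all conditions of Theorem~\ref{thm:coho} (since $\bar\gamma(\bar g)\in E^\times$ is central in $A$ and $\bar\gamma(\bar 1)=1$) and shifts the factor set by the coboundary of $\bar\gamma$. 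With that normalization one simply sets $\Pj(h)=\iota^{-1}(Y(h))$ for $h\in G_\theta$, and the entire $\bar\gamma$-cancellation step in your last paragraph disappears. Your route works, but the paper's choice of $Y$ buys a cleaner computation with no loss of generality.
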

\begin{proof}
Let $X:N\ra\GL_{m}(E)$ (where $m=\theta(1)$) be a representation affording $\theta$ and realized in $E,$ and let $A$ be the $G$-algebra associated with $(G,N,\theta)_{\hH}$ and $X$.
Fix a unitary embedding $\iota \colon A \ra \M_{ms}(\Fp).$ 
Let $Y:G\ra \GL_{ms}(\Fp)$ be a function satisfying the conditions of Theorem \ref{thm:coho}, and $$Y(g)Y(h)=Y(gh)\iota(\alpha(g,h))\quad\forall g,h\in G.$$

  For any $h\in G_{\theta},$ since $Y(h)$ centralizes $\iota(E\cdot 1_A)$ and the centralizer of $\iota(E\cdot 1_A)$ in $\M_{ms}(\Fp)$ is $\iota(A)$, we have $Y(h)\in \iota(A).$ 
  Let $\Pj(h)\in A^{\times}$ be the unique element such that $\iota(\Pj(h))=Y(h).$
  Thus the function $$\Pj:G_{\theta}\ra \GL_m(F),h\mapsto \Pj(h),$$ is a projective representation associated with $\theta$ with factor set $\alpha_{G_{\theta}\times G_{\theta}}.$ 
  (Note that $A^\times = \GL_m(E) \subseteq \GL_m(F)$ by construction.)

  Let $a=(\sigma, g)\in(\hH\times G)_{\theta}.$ 
  For any  $h\in G_{\theta},$  since $Y(g)Y(h)Y(g)^{-1}Y(ghg^{-1})^{-1}$ centralizes $\iota(A),$ there exists $\Psi(g,h)\in E^{\times}$ such that 
  \begin{equation}\label{equ:new2}
    Y(g)Y(h)Y(g)^{-1}=\iota(\Psi(g,h))Y(ghg^{-1}).
  \end{equation}
  Let us compute $\Psi(g,h)$ first. 
  As $Y(g)Y(g^{-1})=Y(1)\iota(\alpha(g,g^{-1})),$ we have $$Y(g)^{-1}=Y(g^{-1})\iota(\alpha(g,g^{-1})^{-1}).$$
  Hence
  \begin{align*}
    Y(g)Y(h)Y(g)^{-1}&
    =Y(g)Y(h)Y(g^{-1})\iota(\alpha(g,g^{-1})^{-1})  \\
     &=Y(ghg^{-1})\iota(\alpha(g,hg^{-1})
  \alpha(h,g^{-1})\alpha(g,g^{-1})^{-1}).
  \end{align*} 
  Thus  $$\Psi(g,h)=\alpha(g,hg^{-1})
  \alpha(h,g^{-1})\alpha(g,g^{-1})^{-1}.$$
  From \ref{equ:new2},
we take the inverse image under $\iota$ to obtain
\begin{equation}\label{equ:new3}
  \Pj(h)^{g^{-1}} = \Psi(g,h)\Pj(ghg^{-1}),
\end{equation} where $\Pj(h)^{g^{-1}}$ is the action of $g^{-1}$ on $\Pj(h)\in A$ via the $G$-algebra structure. 
  Recall that 
\begin{equation}\label{equ:new4}
  \Pj(h)^{g^{-1}} =T_{g^{-1}}^{-1}\cdot \Pj(h)^{\sigma^{-1}}\cdot T_{g^{-1}},            
\end{equation} where $T_{g^{-1}}\in\GL_m(E)$ satisfies $X^{(\sigma, g)^{-1}}=T_{g^{-1}}XT_{g^{-1}}^{-1}$ 
(note that $\sigma_g=\sigma|_E$  by Lemma \ref{lem:repAlg1}). 
Define $T := (T_{g^{-1}}^{-1})^{\sigma}$. 
Combining \ref{equ:new3} and \ref{equ:new4}, we obtain the identity
\[
T_{g^{-1}}^{-1} \cdot \Pj(h)^{\sigma^{-1}} \cdot T_{g^{-1}} = \Psi(g,h) \Pj(ghg^{-1}).
\]
Conjugating both sides by $\sigma$ yields
\[
  T \Pj(h) T^{-1} = \Psi(g,h)^{\sigma} \cdot \Pj(ghg^{-1})^{\sigma},
\]
or equivalently,  $$\Pj^a(h)=(\Psi(g,h)^{\sigma})^{-1}\cdot T\Pj(h)T^{-1}.$$
Thus $\mu_a(h)=(\Psi(g,h)^{\sigma})^{-1}
  =\big(\alpha(g,hg^{-1})^{-1}
  \alpha(h,g^{-1})^{-1} \alpha(g,g^{-1}) \big)^{\sigma},$ completing the proof.
\end{proof}

\begin{thm}\label{thm:coho->H-tri}
  Suppose that $(G,N,\theta)_{\hH}$ and $(H,C,\varphi)_{\hH}$ are $\hH$-triples such that 
\begin{enumerate}
  \item $G=NH$ and $N\cap H=C.$
  \item $(\hH\times H)_{\theta}=(\hH\times H)_{\varphi}.$
\end{enumerate}
Then the following hold.
\begin{enumerate}
  \item $\Fp[\theta]=\Fp[\varphi].$
  \item Let $E=\Fp[\theta],$ and let $\bar{G}=G/N=H/C.$ 
Then the two $\bar{G}$-actions on $E$ induced by the $\hH$-triples $(G,N,\theta)_{\hH}$ and $(H,C,\varphi)_{\hH}$ coincide.
  \item Let $[\bar\alpha],[\bar\beta]\in \tH^2(\bar{G},E^{\times})$ be the cohomology class associated with $(G,N,\theta)_{\hH}$ and $(H,C,\varphi)_{\hH},$ respectively.
If $[\bar\alpha]=[\bar\beta],$ then $$(G,N,\theta)_{\hH}\geqslant (H,C,\varphi)_{\hH}.$$
\end{enumerate}
 \end{thm}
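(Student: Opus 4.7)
The plan is to verify the three claims in order, with (3) being the main substance.

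For (1), since $\Fp[\theta]$ is generated over $\Fp$ by the values $\theta(x)^*$ for $x \in N_{p'}$, the Galois correspondence for the finite field $F$ identifies it with the fixed field of the subgroup $\{\sigma \in \Gal(F/\Fp) \which \theta^\sigma = \theta\}$. As $\hH$ contains every automorphism of $F$, the hypothesis $(\hH\times H)_\theta = (\hH\times H)_\varphi$, applied to pairs of the form $(\sigma,1)$, gives $\hH_\theta = \hH_\varphi$; projecting to $\Gal(F/\Fp)$ yields the same Galois stabilizer and hence the same fixed field. For (2), Lemma \ref{lem:repAlg1} describes the action of $h \in H$ on $E$ through $(G,N,\theta)_{\hH}$ as $\tau|_E$ for any $\tau \in \hH$ satisfying $\theta^\tau = \theta^{h^{-1}}$, equivalently $(\tau,h) \in (\hH\times H)_\theta$; by the stabilizer hypothesis this condition is identical to $(\tau,h) \in (\hH\times H)_\varphi$, so the same $\tau$ computes the action through $(H,C,\varphi)_{\hH}$, and the two actions coincide.

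For (3), the strategy is to modify one set of data so that the two factor sets become equal on $H\times H$, not merely cohomologous, and then invoke Lemma \ref{lem:coho->H-tri} simultaneously for both triples. I would fix representations $X : N \to \GL_{\theta(1)}(E)$ and $X' : C \to \GL_{\varphi(1)}(E)$ realised in $E$, form the associated $G$-algebra $A$ and $H$-algebra $A'$, choose unitary $\Fp$-algebra embeddings $\iota$ and $\iota'$ into matrix algebras, and apply Theorem \ref{thm:coho} to obtain functions $Y : G \to \GL_{\theta(1)s}(\Fp)^\times$ and $Y' : H \to \GL_{\varphi(1)s}(\Fp)^\times$ whose resulting factor sets $\alpha$ and $\beta$ have descents representing the common class $[\bar\alpha]=[\bar\beta]$. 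The cohomologous relation yields $\bar\gamma : \bar G \to E^\times$ with $\bar\beta = \bar\alpha\cdot\partial\bar\gamma$; lifting $\bar\gamma$ to $\gamma : H \to E^\times$ (constant on $C$-cosets) and replacing $Y'(h)$ by $Y'(h)\iota'(\gamma(h)^{-1})$ gives a new factor set $\tilde\beta$ that descends to $\bar\alpha$, and hence satisfies $\tilde\beta(h_1,h_2) = \alpha(h_1,h_2)$ for all $h_1,h_2 \in H$.

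Applying Lemma \ref{lem:coho->H-tri} to the two triples with this common factor set produces a projective representation $\Pj$ of $G_\theta$ with factor set $\alpha|_{G_\theta\times G_\theta}$ and a projective representation $\Pj'$ of $H_\varphi = H_\theta$ with factor set $\alpha|_{H_\theta\times H_\theta}$, which establishes conditions (1)--(3) of Definition \ref{def:H-tri:ord}. Condition (4) then follows because for $a = (\sigma, g) \in (\hH\times H)_\theta = (\hH\times H)_\varphi$ and $h \in H_\theta$, Lemma \ref{lem:coho->H-tri} expresses both $\mu_a(h)$ and $\mu'_a(h)$ as the same function of $\alpha$ evaluated on elements of $H$. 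The main obstacle is the coboundary bookkeeping: one must check that the $\bar G$-action on $E$ appearing in the coboundary formula from $(G,N,\theta)_{\hH}$ agrees with the one intrinsic to $(H,C,\varphi)_{\hH}$ (which is precisely the content of (2)), and that the adjustment of $Y'$ still satisfies properties (1)--(3) of Theorem \ref{thm:coho} for $(H,C,\varphi)_{\hH}$, so that Lemma \ref{lem:coho->H-tri} genuinely applies to the modified data.
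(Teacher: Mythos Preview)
Your proposal is correct and follows essentially the same route as the paper: parts (1) and (2) are proved identically via Galois correspondence and Lemma~\ref{lem:repAlg1}, and part (3) is proved by arranging a single factor set $\alpha$ whose restriction to $H\times H$ serves for both triples and then invoking the explicit formula for $\mu_a$ from Lemma~\ref{lem:coho->H-tri}. The only difference is cosmetic: the paper simply picks a common representative $\bar\alpha\in[\bar\alpha]=[\bar\beta]$ and lifts it to $G\times G$ and $H\times H$ directly, whereas you first build $Y,Y'$ via Theorem~\ref{thm:coho} and then adjust $Y'$ by a coboundary---this extra step is unnecessary since Lemma~\ref{lem:coho->H-tri} only takes a factor set as input, not a $Y$.
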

\begin{proof}
  We prove $\Fp[\theta]=\Fp[\varphi]$ first.
  Recall that we can regard elements in $\hH$ as automorphisms of $F.$
  Let $\hH_0$ be the subgroup of $\hH$ consisting of automorphisms acting trivially on $F.$
  Then $\hH_0\zg\hH$ and $\hH/\hH_0$ is the Galois group of the field extension $\Fp\subseteq F$ ($\Fp\subseteq F$ is a Galois extension because $F$ is finite).
  Since $\hH_{\theta}/\hH_0,\hH_{\varphi}/\hH_0\leqslant \hH/\hH_0$ are subgroups corresponding to the intermediate fields $\Fp[\theta]$ and $\Fp[\varphi],$ respectively, and  $\hH_{\theta}=\hH_{\varphi}$ by our assumption, we have $\Fp[\theta]=\Fp[\varphi]$ by the fundamental theorem of Galois theory.
  
   For any $g\in H,$ let $\sigma_g,\tau_g\in\Gal(E/\Fp)$ be determined by the equations $\theta^{\sigma_g}=\theta^{g^{-1}}$ and $\varphi^{\tau_g}=\varphi^{g^{-1}},$ respectively. 
By Lemma \ref{lem:repAlg1}, each of $\sigma_g$ and $\tau_g$ is the restriction to $E$ of any $\sigma\in \hH$ such that $(\sigma, g)\in(\hH\times H)_{\theta}=(\hH\times H)_{\varphi}$.
Thus $\sigma_g=\tau_g,$ proving (2).
  
  Now assume that $[\bar\alpha]=[\bar\beta].$
  Let $\alpha:G\times G\ra F^{\times}$ and $\beta:H\times H\ra F^{\times}$ be two factor sets that are liftings of some $\bar\alpha\in[\bar\alpha]$ and $\bar\beta\in[\bar\beta],$ respectively, such that $\beta=\alpha_{H\times H}.$ 
  By Lemma \ref{lem:coho->H-tri}, there are projective representations $\Pj$ of $G_{\theta}$ and $\Pj'$ of $H_{\varphi}$ associated with $\theta$ and $\varphi$ with factor sets $\alpha$ and $\beta,$ respectively, and that for any $a=(\sigma, g)\in(\hH\times H)_{\theta},$ the functions $\mu_a:G_{\theta}\ra F^{\times}$ and $\mu'_a:H_{\varphi}\ra F^{\times}$ determined by $\Pj^a\sim\mu_a\Pj$ and $\Pj'^a\sim\mu'_a\Pj'$ satisfy 
  \begin{align*}
    \mu_a(h) &=\big(\alpha(g,hg^{-1})^{-1}
  \alpha(h,g^{-1})^{-1} \alpha(g,g^{-1}) \big)^{\sigma},\quad \forall h\in H_{\theta},\quad \text{and}  \\
    \mu'_a(h)&=\big(\beta(g,hg^{-1})^{-1}
  \beta(h,g^{-1})^{-1} \beta(g,g^{-1}) \big)^{\sigma},\quad
  \forall h\in H_{\theta}. 
  \end{align*}   
  Thus $\mu_a$ and $\mu'_a$ agree on $H_{\theta}.$ 
  Summarizing above, we see $(\Pj,\Pj')$ gives $$(G,N,\theta)_{\hH}\geqslant (H,C,\varphi)_{\hH}.$$ 
  This completes the proof.
\end{proof}
\noindent{\it Remark.} The converse of the above theorem does not hold. Specifically, the partial order relation $(G,N,\theta)_{\hH}\geqslant (H,C,\varphi)_{\hH}$ does not imply that their associated cohomology classes coincide.

\section{The final proof of Theorem A}
The group relations in Theorem~A, namely $G = HN$ and $C = H \cap N$, 
follow directly from the assumptions. Moreover, the identity
\[
(\hH \times H)_\theta = (\hH \times H)_\varphi
\]
holds due to the uniqueness properties of the DGN correspondence: 
$\varphi$ is the unique irreducible complex character of $C$ with $p$-defect zero satisfying $p \nmid (\varphi^N, \theta)_N$, 
and $\theta$ is the unique $M$-invariant irreducible complex character of $N$ with $p$-defect zero satisfying $p \nmid (\varphi^N, \theta)_N$ 
(see \cite[Chapter~V, Theorem~12.1]{NagTsu}).

Since $(G, N, \theta)_{\hH}$ is an $\hH$-triple by assumption, the equality $(\hH \times H)_\theta = (\hH \times H)_\varphi$ implies that $(H, C, \varphi)_{\hH}$ is also an $\hH$-triple. To prove that $(G, N, \theta)_{\hH}$ and $(H, M, \varphi)_{\hH}$ share the same cohomology class, we apply Dade's fusion splitting theorem, introduced below.


\subsection{Dade's fusion splitting theorem}
Several versions of Dade's fusion splitting theorem exist in the literature. In this work, we employ Turull's formulation from \cite[Theorem 4.5]{Tu08}. 
While Turull's original treatment was restricted to finite abelian $p$-groups, we extend these results to arbitrary finite $p$-groups through an inductive approach.

Let $D$ be a finite $p$-group. We begin by recalling the notion of Dade $D$-algebras. 
Let $E$ be a finite field of characteristic $p$, and let $A$ be a $D$-algebra over $E$.
We say $A$ is a \emph{Dade $D$-algebra} (over $E$) if $A$ is a full matrix algebra over $E$, and admits a $D$-stable $E$-basis with at least one trivial $D$-orbit.

For any Dade $D$-algebra $A$, there exists a unique group homomorphism $\rho:D\to A^{\times}$ satisfying
\[
x^d = \rho(d)^{-1}x\rho(d) \quad \text{for all } d\in D \text{ and } x\in A.
\]
We denote by $D_1 := \rho(D)$ the image of $D$ in $A^{\times}$.

The \emph{Brauer quotient} of $A$ is defined as
\[
\bar{A}(D) := A^D\bigg/\sum_{Q<D} A^D_Q,
\]
with the canonical \emph{Brauer homomorphism} $\br^A_D: A^D \to \bar{A}(D)$ (see \cite[\S 11]{Thevenaz} for details). 
By definition of the Dade $D$-algebra, we always have $\bar{A}(D) \neq 0$. 
When the algebra $A$ is clear from context, we simply write $\br_D$ for $\br^A_D$.

\begin{lem}\label{lem:Dade}
  Let $D$ be a finite $p$-group and let $A$ be a Dade $D$-algebra over a finite field $E$ of characteristic $p$. Then the Brauer quotient $\bar{A}(D)$ is a full matrix algebra over $E$.
\end{lem}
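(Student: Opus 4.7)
The plan is to proceed by induction on $|D|$. The base case $|D|=1$ is immediate since $\bar{A}(1)=A$ is a full matrix algebra by hypothesis, so the entire argument reduces to the inductive step.

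For the inductive step, I would exploit the fact that any nontrivial finite $p$-group has a nontrivial center. Thus I can choose a central subgroup $Z\leqslant \Z(D)$ of order $p$. Since $Z$ is cyclic (in particular, abelian), the abelian case of the statement -- which is Turull's original result \cite[Theorem 4.5]{Tu08} applied to the restricted $Z$-algebra structure -- gives that $\bar{A}(Z)$ is a full matrix algebra over $E$. The next task is to equip $\bar{A}(Z)$ with a natural $D/Z$-algebra structure. Since $Z$ is normal in $D$, the subalgebra $A^Z$ is $D$-stable, and the subspaces $A^Z_Q$ for $Q<Z$ are preserved by the $D$-action (conjugation by $D$ permutes $Z$-stable data trivially because $Z$ is central). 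Hence the $D$-action on $A^Z$ descends to an action on $\bar{A}(Z)$ that factors through $D/Z$, since $Z$ itself acts trivially on $A^Z$.

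The next step is to show $\bar{A}(Z)$ is a Dade $D/Z$-algebra, which requires producing a $D/Z$-stable $E$-basis with a trivial orbit. Let $\mathcal{B}\subseteq A$ be the given $D$-stable basis of $A$, and let $\mathcal{B}_0\subseteq\mathcal{B}$ be the subset of $Z$-fixed elements. Because $Z$ is a $p$-group acting on $\mathcal{B}$, standard orbit-counting shows $\mathcal{B}_0\neq\emptyset$, and in fact $\mathcal{B}_0$ contains the given trivial $D$-orbit. One checks that the images $\br^A_Z(\mathcal{B}_0)$ span $\bar{A}(Z)$ modulo the relations inherited from $\sum_{Q<Z}A^Z_Q$, and that the $D/Z$-action permutes these images while still fixing the original trivial-orbit element. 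After extracting a genuine $E$-basis out of this spanning set, the Dade condition for $\bar{A}(Z)$ as a $D/Z$-algebra follows.

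Finally, I would invoke the transitivity of the Brauer construction: for a normal subgroup $N\zg D$ of a $p$-group, there is a canonical isomorphism
\[
\bar{A}(D)\ \cong\ \overline{\bar{A}(N)}(D/N),
\]
which is standard from the theory of $G$-algebras (see, e.g., \cite[\S 11]{Thevenaz}). Applying this to $N=Z$ and combining with the inductive hypothesis -- $\bar{A}(Z)$ is a Dade $D/Z$-algebra over $E$ with $|D/Z|<|D|$, so its Brauer quotient is a full matrix algebra -- yields that $\bar{A}(D)$ is a full matrix algebra over $E$, completing the induction. The main obstacle I anticipate is the verification of the Dade condition for $\bar{A}(Z)$: one has to argue carefully that after passing to the quotient $A^Z/\sum_{Q<Z}A^Z_Q$, a genuine $D/Z$-stable $E$-basis survives with a trivial orbit, since the natural $D$-stable spanning set provided by $\br^A_Z(\mathcal{B}_0)$ is not obviously linearly independent. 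The transitivity statement is essentially bookkeeping once this structural point is settled.
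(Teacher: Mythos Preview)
Your proposal is correct and follows essentially the same inductive strategy as the paper: reduce via a central subgroup $Z$, apply the abelian case to $\bar{A}(Z)$, verify that $\bar{A}(Z)$ is a Dade $D/Z$-algebra, and invoke the transitivity $\bar{A}(D)\cong\overline{\bar{A}(Z)}(D/Z)$ (the paper takes $Z=\Z(D)$ and cites \cite[Proposition~3.8]{Tu08} rather than Theorem~4.5 for the abelian step, but this is cosmetic). Your anticipated obstacle dissolves, and the paper's explicit basis computation for the transitivity isomorphism shows why: the $Z$-fixed basis elements $\mathcal{B}_0$ together with the $Z$-orbit sums $\hat{x}$ for $x\in\mathcal{B}\setminus\mathcal{B}_0$ form an $E$-basis of $A^Z$, and the latter span exactly $\sum_{Q<Z}A^Z_Q$ (the orbit sums of elements in $\mathcal{B}_0$ vanish in characteristic $p$), so $\br_Z(\mathcal{B}_0)$ is already a $D/Z$-stable $E$-basis of $\bar{A}(Z)$ with the required trivial orbit.
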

\begin{proof}

We argue by induction on $|D|.$
If $D$ is abelian, the result follows immediately from \cite[Proposition 3.8]{Tu08}.
Otherwise let $Z=\Z(D)$.
Since $A$ is a Dade $D$-algebra, it is in particular a Dade $Z$-algebra.
Since $Z$ is abelian, we have $\bar{A}(Z)$ is a full matrix algebra over $E.$
Moreover, the $D$-action on $A$ naturally induces a $D/Z$-action on $\bar{A}(Z)$, making $\bar{A}(Z)$ a Dade $D/Z$-algebra. Since $|D/Z| < |D|$, the inductive hypothesis applies, showing that the second Brauer quotient $\overline{\bar{A}(Z)}(D/Z)$ is again a full matrix algebra over $E$.

We are left to prove that 
\begin{equation}\label{equ:iso}
  \bar{A}(D) \cong \overline{\bar{A}(Z)}(D/Z).
\end{equation}
Let \( X \) be a \( D \)-stabilized \( E \)-basis of \( A \), and let \( X^D \) denote the set of \( D \)-fixed elements in \( X \). 
Consider the Brauer homomorphisms
\[
\operatorname{br}_Z \colon A^Z \to \bar{A}(Z) 
\quad \text{and} \quad 
\operatorname{br}_{D/Z} \colon \bar{A}(Z)^{D/Z} \to \overline{\bar{A}(Z)}(D/Z).
\]
We can compute from the definition that the set
\(
\left\{ \operatorname{br}_{D/Z} \circ \operatorname{br}_Z(x) \mid x \in X^D \right\}
\)
forms an \( E \)-basis of \( \overline{\bar{A}(Z)}(D/Z) \), and that
\(
\operatorname{br}_{D/Z} \circ \operatorname{br}_Z(\hat{x}) = 0
\)
for any \( x \in X \setminus X^D \), where \( \hat{x} \) denotes the sum over the \( D \)-orbit of \( x \).
Thus, the composition 
\[
\operatorname{br}_{D/Z} \circ \operatorname{br}_Z \colon A^D \to \overline{\bar{A}(Z)}(D/Z)
\]
is, in fact, the \( D \)-Brauer homomorphism of \( A \), thereby proving \eqref{equ:iso}.
\end{proof}

\begin{thm}\label{thm:Dade}
Let $A$ be a Dade $D$-algebra over $E$ and let $D_1$ be the image of $D$ in $A^{\times}$ as described above.
Let $B:=\bar{A}(D)$ be the Brauer quotient of $A$ with Brauer homomorphism $\mathrm{br}_D\colon A^D \to B$.
 Let $m=\sqrt{\dim_EA},l=\sqrt{\dim_EB}$ and $s=|E:\Fp|.$
 Fix unitary embeddings
          \begin{align*}
          \iota &\colon A \hookrightarrow \mathfrak{E}_1 := \mathrm{M}_{ms}(\mathbb{F}_p), \quad\text{and}\\
          \iota' &\colon B \hookrightarrow \mathfrak{E}_2 := \mathrm{M}_{ls}(\mathbb{F}_p).
          \end{align*}
 Then there exists a group homomorphism $$\phi:\N_{\fE_1^{\times}}(\iota(D_1))\ra \fE_2^{\times}$$ such that $$\phi(\iota(c))=\iota'(\br_D(c)),\quad \forall c\in (A^D)^{\times} .$$ 
\end{thm}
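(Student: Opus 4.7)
The plan is to proceed by induction on $|D|$, in parallel to the reduction used in Lemma~\ref{lem:Dade}. The base case, when $D$ is abelian, is precisely \cite[Theorem~4.5]{Tu08}, so I would invoke it verbatim.

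For the inductive step, assume $D$ is non-abelian and set $Z = \Z(D)$. Then $A$ is also a Dade $Z$-algebra, and as in the proof of Lemma~\ref{lem:Dade}, the Brauer quotient $B_1 := \bar{A}(Z)$ is a full matrix algebra over $E$ that inherits a $D/Z$-action making it a Dade $D/Z$-algebra, with the canonical identification $\bar{A}(D) \cong \overline{B_1}(D/Z)$ compatible with the iterated Brauer homomorphism $\br_{D/Z}\circ\br_Z$. Setting $n = \sqrt{\dim_E B_1}$, I would fix an auxiliary unitary embedding $\iota_1 \colon B_1 \hookrightarrow \fE' := \M_{ns}(\Fp)$.

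Applying the abelian base case to $A$ with the pair $(\iota,\iota_1)$ yields a group homomorphism
$$\phi_1 \colon \N_{\fE_1^\times}(\iota(Z_1)) \to \fE'^\times,\qquad \phi_1(\iota(c)) = \iota_1(\br_Z(c))\text{ for all } c \in (A^Z)^\times,$$
where $Z_1 = \rho(Z)$. Applying the inductive hypothesis to the Dade $D/Z$-algebra $B_1$ with $(\iota_1,\iota')$ (valid since $|D/Z| < |D|$) yields
$$\phi_2 \colon \N_{\fE'^\times}(\iota_1((D/Z)_1)) \to \fE_2^\times,\qquad \phi_2(\iota_1(\bar c)) = \iota'(\br_{D/Z}(\bar c))\text{ for all } \bar c \in (B_1^{D/Z})^\times.$$
For any $y \in \N_{\fE_1^\times}(\iota(D_1))$, conjugation by $y$ permutes $\iota(D_1)$ and so preserves its centre, which coincides with $\iota(Z_1)$; hence $y$ lies in the domain of $\phi_1$. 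Moreover, since $D_1 \subseteq (A^Z)^\times$ and $\br_Z$ identifies $D_1$ modulo $Z$ with $(D/Z)_1 \subseteq B_1^\times$, the identity $\phi_1(\iota(d)) = \iota_1(\br_Z(d)) \in \iota_1((D/Z)_1)$ combined with the homomorphism property of $\phi_1$ forces $\phi_1(y)$ to normalise $\iota_1((D/Z)_1)$. One may therefore set
$$\phi := \phi_2 \circ \phi_1\big|_{\N_{\fE_1^\times}(\iota(D_1))},$$
and the required identity $\phi(\iota(c)) = \iota'(\br_D(c))$ for $c \in (A^D)^\times$ follows by a two-step computation using $\br_D = \br_{D/Z}\circ\br_Z$ on $A^D$.

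The hard part will be the bookkeeping linking the two layers of the Brauer construction: one must verify that $B_1$ genuinely inherits a Dade $D/Z$-algebra structure whose canonical homomorphism $\bar\rho \colon D/Z \to B_1^\times$ agrees with $\br_Z\circ\rho$ (inherited from the $D$-fixed basis $X^D$ of $A$ exhibited in Lemma~\ref{lem:Dade}), and that the iterated Brauer isomorphism $\bar{A}(D) \cong \overline{B_1}(D/Z)$ is compatible with the chosen embeddings $\iota_1$ and $\iota'$; the latter can always be arranged by adjusting $\iota'$ via Lemma~\ref{lem:coho1}.
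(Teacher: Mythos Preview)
Your proposal is correct and follows essentially the same inductive argument as the paper: reduce to the abelian case via \cite[Theorem~4.5]{Tu08}, pass through the intermediate Brauer quotient $\bar{A}(Z)$ with an auxiliary embedding, and set $\phi = \phi_2\circ\phi_1$ using $\br_D = \br_{D/Z}\circ\br_Z$. The only small omission is that you should assume without loss of generality that $\rho\colon D\to D_1$ is faithful (as the paper does), so that $\iota(Z_1)$ genuinely coincides with the centre of $\iota(D_1)$ and is therefore preserved by every $y\in\N_{\fE_1^\times}(\iota(D_1))$.
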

\begin{proof}
  With no loss of generality, we may assume the structural homomorphism 
$\rho\colon D\to D_1$ is faithful. We proceed by induction on $|D|$.

When $D$ is abelian, the result follows immediately from \cite[Theorem 4.5]{Tu08}.

Suppose $D$ is non-abelian.
Let $Z = \mathbf{Z}(D)$ be its center.
Since $\bar{A}(Z)$ is a full matrix algebra over $E$, we fix a unitary embedding
\[
\iota_Z \colon \bar{A}(Z) \hookrightarrow \mathfrak{E}_Z := \mathrm{M}_{ts}(\mathbb{F}_p),
\]
where $t = \sqrt{\dim_E \bar{A}(Z)}$.
As $Z$ is abelian, there exists a group homomorphism
\[
\phi_1 \colon \N_{\mathfrak{E}_1^\times}(\iota(Z_1)) \to \mathfrak{E}_Z^\times
\]
satisfying $\phi_1(\iota(c)) = \iota_Z(\mathrm{br}_Z(c))$ for all $c \in (A^Z)^\times$, where $Z_1 = \rho(Z)$ and $\mathrm{br}_Z \colon A^Z \to \bar{A}(Z)$ is the Brauer homomorphism.
Now observe that $\bar{A}(Z)$ becomes a Dade $D/Z$-algebra, and let $(D/Z)_1$ denote the image of $D/Z$ in $\bar{A}(Z)^\times$.
Since $(D/Z)_1=\br_Z(D_1),$ we have $\phi_1(\iota(D_1))=\iota_Z((D/Z)_1).$
From the proof of Lemma~\ref{lem:Dade}, we identify $\overline{\bar{A}(Z)}(D/Z)$ with $B$ via the established isomorphism.
Let $\br_{D/Z}:\bar{A}(Z)^{D/Z}\ra B$ be the Brauer homomorphism.
By induction, there exists a group homomorphism $$\phi_2:\N_{\fE_Z^{\times}}\big(\iota_Z((D/Z)_1)\big)\ra \fE_2^{\times}$$ such that $\phi_2(\iota_Z(x))=\iota'(\br_{D/Z}(x))$ for all $x\in(\bar A(Z)^{D/Z})^{\times}.$
Since $Z_1$ is characteristic in $D_1,$
  we have $\N_{\fE^{\times}_1}(\iota(D_1))\subseteq \N_{\fE^{\times}_1}(\iota(Z_1)).$
Also we have $$\phi_1\big(  \N_{\fE^{\times}_1}(\iota(D_1))  \big)  \subseteq  
  \N_{\fE^{\times}_Z}\big(\iota_Z((D/Z)_1)\big)$$ as $\phi_1(\iota(D_1))=\iota_Z((D/Z)_1).$
  Thus we can define the group homomorphism
  $$\phi:\N_{\fE^{\times}_1}(\iota(D_1)) \ra \fE^{\times}_2,y\mapsto \phi_2\phi_1(y).$$
Note that $\br_D(c)=\br_{D/Z}\br_Z(c)$ for $c\in A^D.$ 
  we can routinely compute that $\phi(\iota(c))=\iota'(\br_D(c))$  for all $c\in (A^D)^{\times}.$
\end{proof}

\subsection{Proof of theorem A}
Keep the notation of Theorem A.
Let $E=\Fp[\theta].$
At the beginning of this section, we have proved that $G=NH,C=N\cap H$ and $(\hH\times H)_{\theta}=(\hH\times H)_{\varphi}.$
Thus by Theorem \ref{thm:coho->H-tri}, we have $E=\Fp[\varphi].$
Let $X:N\ra \GL_m(E)=A^{\times}$ be a group representation affording $\theta$ and realised in $E,$ and let $A$ be the $G$-algebra associated with  $(G,N,\theta)_{\hH}$ and $X.$ 

{\it Step 1.} The restricted algebra $\Res^G_D(A)$ is a Dade $D$-algebra. Let $\br^A_D:A^D\ra B:=\bar{A}(D)$ be the Brauer homomorphism, where we identify $B$ with the matrix algebra $\mathrm{M}_l(E)$ for some positive integer $l$. 
Then the group representation $C\ra \GL_l(E)=B^{\times},c\mapsto \br^A_D(X(c))$ is absolutely irreducible and affords  $\varphi.$

Let $e_{\theta}\in FN$ and $e_{\varphi}\in FC$ be the block (with defect zero) idempotents corresponding to $\theta$ and $\varphi,$ respectively. 
Note that $e_{\theta}\in EN$ and $e_{\varphi}\in EC.$
The group algebra $EN$ becomes a $D$-algebra via the conjugation action of $D$ on $N$.
Consider the Brauer homomorphism
\[
\br_D^{EN} \colon (EN)^D \to EC
\]
which satisfies $\br_D^{EN}(c) = c$ for $c \in C,$ and $\br_D^{EN}(\tilde{n}) = 0$ for $n \in N \setminus C$, where $\tilde{n}$ denotes the sum of the $D$-conjugacy class of $n.$
The block $ENe_{\theta}$ is a direct summand of $EN$ and is isomorphic to a full matrix algebra over $E$. 
By the properties of the DGN correspondence, we have $\br^{EN}_D(e_{\theta}) = e_{\varphi}$.
This allows us to restrict the Brauer homomorphism to
\[
\br^{ENe_{\theta}}_D \colon (ENe_{\theta})^D \to ECe_{\varphi}.
\]
Since $ENe_{\theta}$ is a full matrix algebra with non-zero Brauer quotient $ECe_{\varphi}$, it follows that $ENe_{\theta}$ is a Dade $D$-algebra. 
Moreover, there exists a $D$-algebra isomorphism
\[
f \colon ENe_{\theta} \to \Res^G_D(A), \quad ne_{\theta} \mapsto X(n) \quad (n \in N).
\]
This induces an $E$-algebra isomorphism $h \colon ECe_{\varphi} \to B$ such that $\br^A_Df=h\br_D^{ENe_{\theta}}.$
The function $$X':C\ra\GL_l(E)=B^{\times},c\mapsto h(ce_{\varphi})$$ is a group representation affording $\varphi$ and realised in $E.$
For any $c\in C,$ we have $$X'(c)=h(ce_{\varphi})
=h\br_D^{ENe_{\theta}}(ce_{\theta})
=\br^A_Df(ce_{\theta})
=\br^A_D(X(c)).$$
This completes the proof of this step.

Let $\bar{G}=G/N=H/C.$ For any $g\in G,$ we denote its image in $\bar G$ by $\bar g\in\bar G.$

{\it Step 2.} 
Let $[\bar\alpha],[\bar\beta]\in\tH^2(\bar{G},E^{\times})$ be the cohomology class associated with $(G,N,\theta)_{\hH}$ and $(H,C,\varphi)_{\hH},$ respectively.
Then $[\bar\alpha]=[\bar\beta].$

Let $s = [E:\mathbb{F}_p]$ and fix a unitary embedding 
\(
\iota \colon A \hookrightarrow \mathfrak{E}_1 := \mathrm{M}_{ms}(\mathbb{F}_p).
\)
Let $\alpha:G\times G\ra E^{\times}$ be a factor set that is a lift of some $\bar\alpha\in[\bar\alpha],$ and let $Y:G\ra \fE_1^{\times}$ be a function satisfying the conditions of Theorem \ref{thm:coho} and such that $Y(g)Y(h)=Y(gh)\iota(\alpha(g,h))$ for all $g,h\in G.$
From {\it Step 1}, we have the representation 
\[
X' \colon C \to \mathrm{GL}_l(E) = B^\times, \quad c \mapsto \mathrm{br}^A_D(X(c)),
\]
which affords $\varphi$ and is realized in $E$. Let $B$ be the $H$-algebra associated with $(H,C,\varphi)_{\mathcal{H}}$ and $X'$. 
Fix a unitary embedding $\iota':B\ra\fE_2:=\M_{ls}(\Fp).$ 
By Theorem~\ref{thm:Dade}, there exists a group homomorphism
\[
\phi \colon \N_{\mathfrak{E}_1^\times}(\iota(D_1)) \to \mathfrak{E}_2^\times
\]
satisfying $\phi(\iota(y)) = \iota'(\mathrm{br}_D(y))$ for all $y \in (A^D)^\times$, where $D_1$ is the image of $D$ in $A^\times$.
For any $g \in H$, we have $Y(g) \in \N_{\mathfrak{E}_1^\times}(\iota(D_1))$, allowing us to define
\[
Y' \colon H \to \mathfrak{E}_2^\times, \quad g \mapsto \phi(Y(g)).
\]
For any $c\in C,$ we compute
\begin{align*}
  Y'(g)^{-1}\iota'(X'(c))Y'(g) &=\phi\big( Y(g)^{-1}\iota(X(c))Y(g)   \big) \\
   &= \phi\big( \iota(X(c^g)) \big)
   =\iota'\big( \br^A_D(X(c^g)) \big)=\iota'(X'(c^g)).
\end{align*}
Similarly, for $z \in E$ with $z^g = z^{\sigma_g}$ (where $\sigma_g \in \mathcal{H}$ satisfies $\theta^{\sigma_g} = \theta^{g^{-1}}$), we have
\[
Y'(g)^{-1}\iota'(z \cdot 1_B)Y'(g) = \iota'(z^g \cdot 1_B).
\]
Since $B$ is $E$-linearly spanned by $\{X'(c) \mid c \in C\}$, we have
\[
Y'(g)^{-1}\iota'(x)Y'(g) = \iota'(x^g) \quad \text{for all } x \in B.
\]
It is routine to check that $Y'(c)=\iota'(X'(c)),Y'(gc)=Y'(g)Y'(c),Y'(cg)=Y'(c)Y'(g)$ for all $g\in H$ and $c\in C.$
Let $\beta \colon H \times H \to E^\times$ be the factor set defined by 
\begin{equation}\label{eq:factor-relation}
Y'(g)Y'(h) = Y'(gh)\iota'(\beta(g,h)).
\end{equation}
Direct computation shows
\[
Y'(g)Y'(h) = \phi(Y(g)Y(h))
= \phi(Y(gh)\iota(\alpha(g,h)))
= Y'(gh)\iota'(\alpha(g,h)).
\]
Comparing this with \eqref{eq:factor-relation}, we conclude that 
\[
\alpha(g,h) = \beta(g,h) \quad \text{for all } g,h \in H,
\]
which completes the proof of this step.

Since $[\bar{\alpha}] = [\bar{\beta}]$, an application of Theorem~\ref{thm:coho->H-tri} yields 
\[
(G,N,\theta)_{\mathcal{H}} \geqslant (H,C,\varphi)_{\mathcal{H}}.
\]
Theorem~A now follows immediately from Theorem~\ref{thm:H-tri}.

\subsection{Proof of Corollary B}
Keep the notation of Corollary B.
Let $T=\Aut(G)_{\theta^{\hH},D},$ and let $\widetilde{G}= G\semi T$ be the semidirect product of $G$ and $T.$ 
Observe that $M$ is normal in $\widetilde{G}$ and $\N_{\widetilde{G}}(D)=\N_G(D)\semi T.$ 
By Theorem A, there exists an $\hH\times \N_{\widetilde{G}}(D)$-equivariant bijection $$\varDelta_G:\IBr(G\which \theta^{\hH})\ra\IBr(\N_G(D)\which \varphi^{\hH}).$$ 
Since $\big(\mathcal{H} \times \mathrm{Aut}(G)\big)_{\theta,D}$ embeds naturally as a subgroup of $\mathcal{H} \times \mathrm{N}_{\widetilde{G}}(D)$, the map $\Delta_G$ is automatically $\big(\mathcal{H} \times \mathrm{Aut}(G)\big)_{\theta,D}$-equivariant.
Furthermore, conditions (2) and (3) of Theorem~A imply that $\Delta_G$ restricts to an $\big(\mathcal{H} \times \mathrm{Aut}(G)\big)_{\theta,D}$-equivariant bijection
\[
f \colon \mathrm{IBr}(G \mid \theta) \to \mathrm{IBr}(\mathrm{N}_G(D) \mid \varphi).
\]
The statement about vertices follows immediately from condition (4) of Theorem~A.

\noindent\textbf{Acknowledgement.} The author is deeply grateful to Yuanyang Zhou for his invaluable suggestions and to Xueqin Hu for kindly introducing Dade’s theorem to me.
 The author also wishes to thank the anonymous referee for their meticulous reading of the manuscript and their insightful comments, which have greatly improved the quality of this paper.


\begin{thebibliography}{99}

\bibitem{Dade}
E. Dade, A correspondence of characters, in: {\itshape The Santa Cruz Conference on Finite Groups}, Univ. California, Santa
Cruz, CA, 1979, in: Proc. Sympos. Pure Math., vol. 37, Amer. Math. Soc., Providence, RI, 1980, pp. 401–403

\bibitem{Feit}
W. Feit, {\itshape The representation theory of finite groups}, North-Holland Publishing Co., Amsterdam-New York, 1982. 

\bibitem{FFZ24}
Z. Feng, Q. Fu, Y. Zhou, A reduction theorem for the Galois Alperin wieght conjecture, ArXiv: arXiv:2312.02594. 

\bibitem{IMN07}
I.\,M. Isaacs, G. Malle, G. Navarro, A reduction theorem for the McKay conjecture, {\itshape Invent. Math.} \textbf{170} (2007), no. 1, 33-101.

\bibitem{Linckelmann}
M. Linckelmann, {\itshape The block theory of finite group algebras}, volume II, Cambridge University Press, Cambridge, 2018.

\bibitem{Ro23}
D. Rossi,
The McKay conjecture and central isomorphic character triples. 
{\itshape J. Algebra} 
\textbf{618} (2023), 42–55.

\bibitem{MRR23}
J.\,M.~Martínez, N.~Rizo, D.~Rossi, The Alperin weight conjecture and the Glauberman correspondence via character triples, 	arXiv:2311.05536.

\bibitem{NagTsu}
H. Nagao, Y. Tsushima, {\itshape Representations of finite groups}, Academic Press, Boston, 1989.



\bibitem{Navarro:McKay}
G. Navarro, 
{\itshape Character theory and the McKay conjecture},
Cambridge Studies in Advanced Mathematics, vol. 175, Cambridge University Press, Cambridge, 2018. 

\bibitem{NS14}
G. Navarro, B. Sp\"ath,
On Brauer's height zero conjecture,
{\itshape J. Eur. Math. Soc.} 
\textbf{16} (2014), no. 4, 695–747.

\bibitem{NT11}
G. Navarro, P.\,H. Tiep, A reduction theorem for the Alperin weight conjecture, {\itshape Invent. Math.} \textbf{184} (2011) 529-565.

\bibitem{NSV20}
G. Navarro, B. Sp\"ath, C. Vallejo, A reduction theorem for the Galois--McKay conjecture, {\itshape Trans. Amer. Math. Soc.} \textbf{373} (2020), 6157--6183.

\bibitem{Puig}
L. Puig, 
Local extensions in endo-permutation modules split: a proof of Dade's theorem, in: {\itshape Séminaire sur les groupes finis}, Tome III, iii, pp.199–205,
Publ. Math. Univ. Paris VII, 25, Univ. Paris VII, Paris, 1986.

\bibitem{SV16} 
B. Sp\"ath, C. Vallejo,
Brauer characters and coprime action,
{\itshape J. Algebra} 
\textbf{457} (2016) 276–311.

\bibitem{Thevenaz}
J. Thévenaz, 
{\itshape $G$-algebras and modular representation theory}, The Clarendon Press, Oxford University Press, New York,  1995.



\bibitem{Tu94}
A. Turull, Clifford theory with Schur indices, {\itshape J. Algebra} \textbf{170} (1994) 661-677.

\bibitem{Tu08}
A. Turull, Above the Glauberman correspondence, {\itshape Adv. math.} \textbf{217} (2008) 2170-2205.



\bibitem{Tu09:BC}
A. Turull, The Brauer-Clifford group, {\itshape J. Algebra} \textbf{321} (2009) 3620-3642.

\bibitem{Tu09:FM}
A. Turull, Brauer-Clifford equivalence of full matrix algebras, {\itshape J. Algebra} \textbf{321} (2009) 3643-3658.

\end{thebibliography}
\end{document}